\documentclass[12pt,a4paper]{amsart}
\usepackage{a4wide,fullpage,setspace}
\usepackage{amssymb,amsmath,mathtools}
\usepackage{tikz-cd}
\usepackage[shortlabels]{enumitem}
\usepackage[numbers,sort&compress]{natbib}
\definecolor{dark-red}{rgb}{0.5,0.15,0.15}
\usepackage[colorlinks=true,linkcolor=black,citecolor=dark-red,urlcolor=dark-red]{hyperref}
\swapnumbers

\makeatletter
\let\leq\@undefined
\let\geq\@undefined
\let\vec\@undefined
\let\phi\@undefined
\let\epsilon\@undefined
\let\injlim\@undefined
\let\projlim\@undefined
\makeatother
\newcommand{\leq}{\leqslant}
\newcommand{\geq}{\geqslant}
\newcommand{\vec}{\overrightarrow}
\newcommand{\phi}{\varphi}
\newcommand{\epsilon}{\varepsilon}
\newcommand{\injlim}{\varinjlim}
\newcommand{\projlim}{\varprojlim}
\newcommand{\Set}{\mathsf{Set}}
\newcommand{\Flow}{\mathsf{Flow}}
\newcommand{\FLOW}{\mathsf{FLOW}}
\newcommand{\Top}{\mathsf{Top}}
\newcommand{\TOP}{\mathsf{TOP}}
\newcommand{\PP}{\mathbb P}

\newcommand{\Ch}{\operatorname{Ch}}

\newcommand{\Glob}{\operatorname{Glob}}

\newcommand{\Poset}{\mathrm{PoSet}}
\newcommand{\Sp}{\mathsf{S}}
\newcommand{\Di}{\mathsf{D}}
\newcommand{\cocartesian}{\arrow[lu, phantom, "\ulcorner"{font=\Large}, pos=0]}

\newtheorem*{thmN}{Theorem}
\newtheorem*{corN}{Corollary}
\newtheorem{thm}{Theorem}[section]
\newtheorem{proposition}[thm]{Proposition}
\newtheorem{lem}[thm]{Lemma}
\newtheorem{cor}[thm]{Corollary}
\theoremstyle{definition}
\newtheorem{definition}[thm]{Definition}

\title{The chain replacement of a poset flow}
\author[P. Gaucher]{Philippe Gaucher}
\address{Universit\'e Paris Cit\'e, CNRS, IRIF, F-75013, Paris, France}
\urladdr{\url{https://www.irif.fr/~gaucher}}
\subjclass[2020]{55U35,05E45,06A07,18N40}
\keywords{directed homotopy, flow, poset, cofibrant replacement, model category, Hurewicz model structure, chain replacement, enriched semicategory}

\begin{document}

\begin{abstract}
	We introduce the chain replacement of a poset flow: it is obtained by considering the simplicial nerves of the posets of strictly increasing chains in the given poset, ordered by refinement. It maps posets to q-cofibrant flows and inclusions of posets to q-cofibrations. Using the combinatorial properties of the chain replacement, we prove that pushouts along the chain replacement of an order-reflecting inclusion of posets preserve spaces of execution paths. By introducing the Hurewicz model structure on flows (or H-model structure), we deduce the same property for any q-cofibrant replacement of an order-reflecting inclusion of posets.
\end{abstract}

\maketitle
\setcounter{tocdepth}{1}
\tableofcontents
\hypersetup{linkcolor = dark-red}

\section{Introduction}

\subsection*{Presentation} Flows provide a convenient topological model for directed phenomena arising in concurrency \cite{DAT_book}: they are small semicategories enriched over a convenient category of topological spaces, whose morphism spaces are spaces of execution paths. In this framework, posets are regarded as flows, with one execution path from \(x\) to \(y\) precisely when \(x<y\). Such poset flows occur naturally in the homotopy theory of flows, for instance in the definition of dihomotopy \cite{hocont} and in the study of globular subdivisions \cite{MultipointedSubdivision}. A recurring issue is that poset flows are not q-cofibrant in general. The purpose of this paper is to introduce and use a particularly explicit q-cofibrant replacement, the \emph{chain replacement}. For a poset \(P\), the flow \(\Ch(P)\) has the same states as \(P\), and its space of execution paths from \(x\) to \(y\) is the geometric realization of the nerve of the poset of strict chains from \(x\) to \(y\), ordered by refinement. This construction is functorial in strictly increasing maps, takes posets to q-cofibrant flows, and sends inclusions of posets to q-cofibrations.

The main result concerns the effect of attaching such a replacement to an arbitrary flow. We prove that:
\begin{thmN}[Theorem~\ref{thm:r}]
	Let \(i: P\subset Q\) be an order-reflecting inclusion of posets, and let \(i^{\mathrm{cof}}: P^{\mathrm{cof}}\to Q^{\mathrm{cof}}\) be any \(q\)-cofibrant replacement of \(i\). In every pushout diagram
	\[
	\begin{tikzcd}
		P^{\mathrm{cof}} \arrow[r] \arrow[d,"i^{\mathrm{cof}}"',rightarrowtail] & X \arrow[d,rightarrowtail] \\
		Q^{\mathrm{cof}} \arrow[r] & Y \cocartesian,
	\end{tikzcd}
	\]
	the induced map
	\[
	\PP_{u,v}X\longrightarrow \PP_{f(u),f(v)}Y
	\]
	is a trivial h-cofibration of spaces for all states \(u,v\in X^{0}\).
\end{thmN}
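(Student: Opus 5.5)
The strategy has two parts. First prove the statement for the chain replacement \(\Ch(i):\Ch(P)\to\Ch(Q)\) by a combinatorial analysis. Then transfer it to an arbitrary q-cofibrant replacement \(i^{\mathrm{cof}}\) using the H-model structure on flows.

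\emph{Step 1: the chain replacement.} By the properties recalled in the introduction, \(\Ch(i)\) is a q-cofibration. The pushout \(Y\) of \(X\leftarrow \Ch(P)\to\Ch(Q)\) has states \(Y^0=X^0\sqcup(Q\setminus P)\). I would describe \(\PP Y\) as a free semicategory-style colimit. An execution path of \(Y\) from \(f(u)\) to \(f(v)\) is a formal composite alternating between paths of \(X\) and paths of \(\Ch(Q)\), modulo the identifications coming from \(\Ch(P)\).

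Now take \(u,v\in X^0\). Every piece lying in \(\Ch(Q)\) starts and ends at points of \(P\), since it is glued to \(X\) at both ends. Because \(P\subset Q\) is order-reflecting, a chain of \(Q\) from \(p\) to \(p'\) with \(p,p'\in P\) has endpoints comparable in \(P\). The new data in \(\PP_{p,p'}\Ch(Q)\) compared with \(\PP_{p,p'}\Ch(P)\) are therefore exactly the chains passing through elements of \(Q\setminus P\).

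Let \(C_Q(p,p')\) denote the poset of chains in \(Q\) from \(p\) to \(p'\), and \(C_P(p,p')\) the subposet of chains in \(P\). I would show that the inclusion of nerves \(|C_P(p,p')|\subset|C_Q(p,p')|\) is a deformation retract, in fact a trivial h-cofibration. The key observation is that every chain in \(Q\) from \(p\) to \(p'\) is refined by, and also coarsened to, the chain \(\{p<p'\}\). The coarsening is the minimal element \(\{p<p'\}\), which lies in \(P\) precisely by order-reflection. So both posets have a least element and are contractible, and the inclusion is a subcomplex inclusion between contractible CW complexes, hence a trivial h-cofibration.

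\emph{Step 2: the pushout.} To pass from \(\Ch(P)\to\Ch(Q)\) to the pushout, I would filter \(\PP_{u,v}Y\) by the number of \(\Ch(Q)\)-pieces in a composite. Each stage of the filtration is obtained from the previous one by pushouts of maps of the form
\[
\PP X\times\cdots\times\bigl(|C_P|\subset|C_Q|\bigr)\times\cdots\times\PP X .
\]
Trivial h-cofibrations are stable under products with spaces, pushouts, and transfinite composition. Hence the whole map \(\PP_{u,v}X\to\PP_{f(u),f(v)}Y\) is a trivial h-cofibration. The main obstacle is this step: making the explicit description of the pushout's path spaces rigorous, and checking that the gluings really are pushouts along the relevant inclusions (pushout-products). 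I would model this on the standard computation of pushouts of flows along cofibrations of the form \(\Glob(\partial)\to\Glob(\text{disk})\), carried out cell by cell.

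\emph{Step 3: arbitrary replacements.} Using the H-model structure, any two q-cofibrant replacements \(i^{\mathrm{cof}}\) and \(\Ch(i)\) of \(i\) are cofibrant objects in the arrow category that are weakly equivalent. Between q-cofibrant flows a weak equivalence is a homotopy equivalence, hence an h-equivalence, so \(i^{\mathrm{cof}}\) and \(\Ch(i)\) are h-homotopy equivalent as arrows. Pushouts along h-cofibrations are h-homotopy invariant (left properness in the H-structure). Therefore the pushout \(Y\) for \(i^{\mathrm{cof}}\) is h-equivalent under \(X\) to the pushout for \(\Ch(i)\). So \(\PP_{u,v}X\to\PP Y\) is a homotopy equivalence; it is also an h-cofibration, because q-cofibrations of flows are h-cofibrations and are preserved on path spaces by pushouts. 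Together these give a trivial h-cofibration.
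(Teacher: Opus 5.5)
Your outline is the paper's: contractibility of the chain posets via order-reflection, the remark that \(\Ch(Q)\)-blocks between states of \(X\) have endpoints in \(P\), and a transfer by the gluing lemma in the H-model structure. However, Step 2 fails as stated. The stages of your filtration of \(\PP_{u,v}Y\) are not pushouts along maps \(\PP X\times\cdots\times(|N\mathcal C_P|\subset|N\mathcal C_Q|)\times\cdots\times\PP X\). Several new pieces would already require iterated pushout-products. The real obstruction is that \(\Ch(Q)|_P\) is not freely generated over \(\Ch(P)\) by these hom-inclusions. A new path \(\gamma*\beta\) with \(\gamma\in\PP\Ch(P)\) is identified in \(Y\) with \((x*\bar\gamma)*\beta\), where \(\bar\gamma\) is the image of \(\gamma\) in \(X\), so the same point arises from different pieces.

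For instance, take \(P=\{a<c<e\}\), \(Q=P\cup\{d\}\) with \(c<d<e\), and \(X=\Ch(P)\), so that \(Y=\Ch(Q)\). Your first stage glues two things to the edge \(\PP_{a,e}X\): the square \(|N\mathcal C_Q(a,e)|\), and a whisker \(\PP_{a,c}X\times|N\mathcal C_Q(c,e)|\) attached at \((a<c<e)\). But in \(Y\) this whisker is just the edge of the square from \((a<c<e)\) to \((a<c<d<e)\). A cell-by-cell analysis does not rescue the argument either. The cells of \(\Ch(P)\to\Ch(Q)|_P\) are \(I^{\mathrm{gl}}\)-cells, one for each \(P\)-indecomposable nondegenerate simplex outside \(N\mathcal C_P\). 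Individually they yield h-cofibrations on path spaces, but not homotopy equivalences.

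The missing idea is that you never need to compute this pushout. Factor \(\Ch(i)\) as \(\Ch(P)\to\Ch(Q)|_P\to\Ch(Q)\).
The first map is a relative cell complex, built from the cells just mentioned via a \(P\)-relative normal form. With your homwise contractibility, it is therefore a \emph{trivial q-cofibration}. This is a statement in \(\Flow\), which your homwise Step 1 does not provide. Its pushout \(X\to X_1\) is then a trivial q-cofibration, hence a retract of a relative \(J^{\mathrm{gl}}\)-cell complex. Pushouts of \(J^{\mathrm{gl}}\)-cells induce trivial h-cofibrations on path spaces, and the relative-\(T_1\) colimit theorem handles the limit stages.
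The second map is dealt with by the full-subflow pushout lemma, which is the precise form of your endpoint remark and gives \(X_1\cong Y|_{f(X^0)}\).

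In Step 3, ``weakly equivalent as arrows'' does not let you compare pushouts under \(X\). You need a strict map of squares \(\ell:\Ch(P)\to P^{\mathrm{cof}}\), \(\ell':\Ch(Q)\to Q^{\mathrm{cof}}\) over \(i\). It is obtained by lifting the cofibration between cofibrant objects \(\Ch(P)\to\Ch(Q)\) against the trivial fibrations \(P^{\mathrm{cof}}\to P\) and \(Q^{\mathrm{cof}}\to Q\), and the chain pushout must then be formed along the composite of \(\ell\) with \(P^{\mathrm{cof}}\to X\). The gluing lemma also requires that q-cofibrations are H-cofibrations, which must be proved rather than assumed. With these additions, your Step 3 coincides with the paper's argument.
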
  
We recover \cite[Proposition~8.8]{MultipointedSubdivision} as a particular case; this result is a crucial step in \cite{MultipointedSubdivision} toward establishing that globular subdivisions are dihomotopy equivalences in the sense of \cite{hocont}.  Thus, although new states and new execution paths are attached, no new homotopical information is created between the original states. The order-reflecting hypothesis is essential in Theorem~\ref{thm:r}: if \(P\) is not full as a subposet of \(Q\), then \(Q\) may contain comparabilities between original states that are invisible in \(P\), and these comparabilities necessarily create new path spaces after replacement. 

The proof first establishes the theorem for the chain replacement in Theorem~\ref{thm:canonical}. The key point is a combinatorial analysis of the simplices in the nerves of chain posets: the cells of \(\Ch(Q)\) whose endpoints lie in \(P\) can be separated from the cells with genuinely new endpoints, and the order-reflecting hypothesis ensures that the former part is obtained from \(\Ch(P)\) by a trivial q-cofibration (Theorem~\ref{thm:order-reflecting-effect}). A purely semicategorical pushout lemma for full subflows then shows that the remaining cells do not affect the path spaces between the original states (Proposition~\ref{prop:full-subflow-pushout}). To pass from the chain replacement to an arbitrary q-cofibrant replacement, we introduce an auxiliary Hurewicz model structure on flows, obtained from the topological enrichment. In this model structure all flows are H-cofibrant, every q-cofibration is an H-cofibration, and weak equivalences between q-cofibrant flows are H-weak equivalences. In the proof of Theorem~\ref{thm:r}, these comparison results allow a gluing-lemma argument to transfer the path-space preservation theorem from \(\Ch(P)\to\Ch(Q)\) to any q-cofibrant replacement of \(P\subset Q\).

We deduce by a standard transfinite induction:
\begin{corN} [Corollary~\ref{cor:final}]
	Let \(f:X\to Y\) be a retract of a transfinite composition of pushouts of q-cofibrant replacements of order-reflecting inclusions of posets. Then, for any pair of states \(u,v\in X^0\), \(f\) induces a trivial h-cofibration of spaces 
	\[
	\begin{tikzcd}
		\PP_{u,v} X \arrow[r,"\simeq"] & \PP_{f(u),f(v)} Y.
	\end{tikzcd}
	\]
	In particular, it is a homotopy equivalence.
\end{corN}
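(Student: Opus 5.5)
The plan is to reduce the corollary to Theorem~\ref{thm:r} by the usual cell-complex argument. I would first treat a single pushout, then transfinite compositions, and finally retracts. Throughout, I write \(\mathcal{C}\) for the class of continuous maps that are trivial h-cofibrations of spaces, i.e.\ closed Hurewicz cofibrations that are homotopy equivalences. The key facts about \(\mathcal{C}\) that I will use are:
\begin{itemize}
\item it contains the identities;
\item it is closed under composition;
\item it is closed under retracts (it is the class of trivial cofibrations of the Str{\o}m model structure);
\item it is closed under transfinite composition.
\end{itemize}
The last point is standard for closed h-cofibrations that are homotopy equivalences; it can be seen, for instance, via the lifting characterization against h-fibrations.

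\textbf{Step 1 (one cell).} Let \(X\to Y\) be a pushout of some \(i^{\mathrm{cof}}:P^{\mathrm{cof}}\to Q^{\mathrm{cof}}\). Then Theorem~\ref{thm:r} gives directly that \(\PP_{u,v}X\to\PP_{f(u),f(v)}Y\) lies in \(\mathcal{C}\) for all \(u,v\in X^0\).

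\textbf{Step 2 (transfinite composition).} Let \(X=X_0\to X_1\to\cdots\to X_\lambda=Y\) be a \(\lambda\)-sequence of such pushouts, continuous at limit ordinals. For fixed \(u,v\in X_0^0\), I would consider the induced sequence of spaces \(\PP_{u_\alpha,v_\alpha}X_\alpha\), where \(u_\alpha,v_\alpha\) denote the images of \(u,v\). The essential point is that the path-space functor \(\PP\) commutes with this colimit at limit ordinals: \(\PP_{u_\beta,v_\beta}X_\beta=\varinjlim_{\alpha<\beta}\PP_{u_\alpha,v_\alpha}X_\alpha\) for limit \(\beta\). The state set of a colimit of flows is the colimit of the state sets. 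Path spaces of a filtered colimit of flows are computed as the colimit of the path spaces, because the composition in a colimit of flows along a filtered diagram is created levelwise. Here one uses that the spaces live in a convenient category (\(\Delta\)-generated spaces) in which filtered colimits commute with finite products, so that the composition maps pass to the colimit. Granting this, the sequence of path spaces is a \(\lambda\)-sequence whose successor maps lie in \(\mathcal{C}\) by Step 1, so its transfinite composite lies in \(\mathcal{C}\). The main obstacle is exactly this colimit-compatibility of \(\PP\). If it is not already recorded earlier in the paper, I would cite the known description of colimits in \(\Flow\) along transfinite compositions, e.g.\ from \cite{DAT_book}. An alternative for injective state maps would be to present \(X_\beta\) as a free construction and check the colimit directly on generators.

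\textbf{Step 3 (retract).} Suppose \(f\) is a retract of a transfinite composite \(g:X'\to Y'\) of such pushouts. The retraction data \(X\to X'\to X\) and \(Y\to Y'\to Y\) are compatible with \(f\) and \(g\). Applying \(\PP\) at \(u,v\) and at their images exhibits \(\PP_{u,v}X\to\PP_{f(u),f(v)}Y\) as a retract of \(\PP_{u',v'}X'\to\PP_{g(u'),g(v')}Y'\), where \(u',v'\) are the images of \(u,v\) in \(X'\). The latter map lies in \(\mathcal{C}\) by Step 2, and \(\mathcal{C}\) is closed under retracts, so the former does too. Since every map in \(\mathcal{C}\) is in particular a homotopy equivalence, the final sentence of the corollary follows.
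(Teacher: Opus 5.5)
Your proposal is correct, and its skeleton is the paper's: one pushout via Theorem~\ref{thm:r}, then the transfinite tower, then the retract. The genuine difference is how you obtain continuity of the tower \(\bigl(\PP_{u_\alpha,v_\alpha}X_\alpha\bigr)_\alpha\) at limit ordinals.

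You use that \(\PP\) preserves all filtered colimits of flows, and this is valid in the paper's setting. Since \(\Top\) is cartesian closed, \(-\times B\) is a left adjoint. Because the diagonal of a filtered category is final, this gives \(\injlim(A_i\times B_i)\cong\injlim A_i\times\injlim B_i\). Hence the stagewise-defined composition on \(\injlim\PP X_i\) is continuous, and the resulting flow is the colimit.

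You should, however, make explicit the passage from total path spaces to fixed endpoints. The part of \(\PP X_\alpha\) lying over \((u_\beta,v_\beta)\) may a priori contain pairs other than \((u_\alpha,v_\alpha)\). You need that these are absorbed at some later stage \(\gamma<\beta\): filtered colimits of sets commute with pullbacks, and fibres over points of the discrete state set are clopen summands. With this, your argument needs no hypothesis on state maps.

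The paper handles the limit step differently. It first proves that all state maps in the tower are injective: the state map of \(i^{\mathrm{cof}}\) is conjugate to \(P\subset Q\) through the state bijections of the two trivial q-fibrations, and injections are stable under pushout and transfinite composition in \(\Set\). Injectivity makes each \(\PP X_\alpha\to\PP X_{\alpha+1}\) a coproduct of trivial h-cofibrations followed by a summand inclusion. This is an h-cofibration, hence relative-\(T_1\), which is the hypothesis of Theorem~\ref{thm:path-almost-accessible}. Injectivity is then used again to restrict the resulting homeomorphism to the \((u,v)\)-components.

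What each route buys:
\begin{itemize}
\item Yours is shorter and more general. It works for any tower of flows and never needs injectivity of state maps or the h-cofibration property of the total path-space maps. But it rests on a colimit-preservation property of \(\PP\) that the paper does not record. You should therefore prove it (as sketched above) rather than cite it vaguely.
\item The paper's stays within the previously established accessibility theorem. That theorem also guarantees that the colimit carries the final topology, which matters for the \(\Delta\)-Hausdorff variant of \(\Top\).
\end{itemize}
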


\subsection*{Future work} The combinatorial constructions developed in this paper provide the main technical tool for a subsequent paper, where we prove the cubical analogue of \cite{MultipointedSubdivision}, namely that barycentric subdivisions are dihomotopy equivalences in the sense of \cite{hocont}.

\subsection*{Outline of the paper} Section~\ref{sec:notations} fixes the conventions on flows, posets, and order-reflecting inclusions. Section~\ref{sec:chain-replacement} defines the chain replacement and proves its basic cellular properties. Section~\ref{sec:subflow-lemma} proves the full-subflow pushout lemma. Section~\ref{sec:pushout-along-chain-replacement} proves the main result for the chain replacement. Section~\ref{sec:H-model-structure} constructs the Hurewicz model structure on flows. Section~\ref{sec:comparison-q-H} compares this model structure with the q-model structure. Finally, Section~\ref{sec:pushout-along-q-replacement} combines these ingredients to prove the theorem for arbitrary q-cofibrant replacements.

\section{Notation and order-reflecting inclusions}
\label{sec:notations}

\(\Top\) denotes either the category of \(\Delta\)-generated spaces or the category of \(\Delta\)-Hausdorff \(\Delta\)-generated spaces (cf. \cite[Section~2 and Appendix~B]{leftproperflow}). It can be equipped with its q-model structure or its h-model structure. \(\Top\) is locally presentable \cite[Corollary~3.7]{FR} and cartesian closed \cite[Proposition~2.5]{mdtop}; its internal hom is denoted by \(\TOP(-,-)\). By compact, we mean quasicompact Hausdorff.

A \textit{flow} is a small semicategory enriched over \(\Top\). The set of objects of a flow \(X\), called its \textit{states}, is denoted by \(X^0\). The space of morphisms, called \textit{execution paths}, from \(u\) to \(v\) is denoted by \(\PP_{u,v}X\). The strictly associative composition law is denoted by \(*\). The category of flows is denoted by \(\Flow\). Every set can be viewed as a flow without execution paths. The flow \(\Glob(Z)\) is defined by 
\[
\Glob(Z)^0=\{0,1\},\qquad \PP\Glob(Z)=\PP_{0,1}\Glob(Z)=Z.
\]
\(\Flow\) can be equipped with its q-model structure (e.g. \cite[Theorem~7.4]{QHMmodel}). The q-model structure is cofibrantly generated. The set of generating q-cofibrations is \(I^{\mathrm{gl}} \cup \{C:\varnothing\to \{0\},R:\{0,1\}\to \{0\}\}\) with 
\[
I^{\mathrm{gl}}=\{\Glob(\Sp^{n-1})\subset \Glob(\Di^n) \mid n\geq 0\}
\]  
where \(\Sp^{n-1}=\{(x_1,\dots,x_n)\in \mathbb R^n\mid \sum_i x_i^2=1\}\) is the \((n-1)\)-dimensional sphere, \(\Sp^{-1}=\varnothing\) and \(\Di^{n}=\{(x_1,\dots,x_n)\in \mathbb R^n\mid \sum_i x_i^2\leq 1\}\) is the \(n\)-dimensional disk. The set of generating trivial q-cofibrations is
\[
J^{\mathrm{gl}}=\{\Glob(\Di^{n}\times\{0\})\subset \Glob(\Di^n\times[0,1]) \mid n\geq 0\}.
\]  
A \textit{relative cell complex} is a transfinite composition of pushouts of generating q-cofibra\-tions. A \textit{cell complex} \(X\) is a flow \(X\) such that the canonical map \(\varnothing\to X\) is a relative cell complex.

If \(B\) is a flow whose states are identified with \(Q\), we write \(B|_P\) for the \textit{full} subflow on the states of \(P\):
\[
(B|_P)^0=P,
\qquad
\PP_{x,y}(B|_P)=\PP_{x,y}B
\quad (x,y\in P).
\]

A (small) poset \(R\) is viewed as the flow with state set \(R\), with one execution path from \(x\) to \(y\) if \(x<y\), and no execution path otherwise.

Let \(\Poset^+\) denote the category of posets and strictly increasing maps. An inclusion of posets \(P\subset Q\) is called \emph{order-reflecting} if for all \(x,y\in P\),
\[
x\leq_Q y \quad\Longrightarrow\quad x\leq_P y .
\]
Equivalently, the order of \(P\) is the order induced from \(Q\). Every order-reflecting inclusion of posets is a morphism of \(\Poset^+\). 

For a small category \(\mathcal C\), \(N(\mathcal C)\) denotes the simplicial nerve of \(\mathcal C\) and \(|N(\mathcal C)|\) denotes its geometric realization. The geometric realization of a simplicial set is obtained by attaching an \(n\)-dimensional cell for each nondegenerate \(n\)-simplex of the given simplicial set. By \cite[Proposition~B.15 and B.16]{leftproperflow}, the topology of the geometric realization is therefore the final topology for the cells. This fact is used in the proofs of Proposition~\ref{prop:chain-replacement} and Proposition~\ref{prop:restriction-cofibration} to prove that some continuous bijections are homeomorphisms. By \cite[Lemma~3.1.8]{MR99h:55031}, the comparison map
\[
|\Delta^m\times \Delta^n|\longrightarrow |\Delta^m|\times |\Delta^n|
\]
is a homeomorphism which is natural in \(m\) and \(n\), where \(\Delta^k\) denotes the representable \(k\)-dimensional simplex. Thus the realization functor in any cartesian closed category of topological spaces preserves finite products. There are natural homeomorphisms 
\[
|N(\mathcal C\times \mathcal D)| \cong |N(\mathcal C) \times N(\mathcal D)| \cong |N(\mathcal C)| \times |N(\mathcal D)|
\]
since the simplicial nerve functor is a right adjoint \cite[Theorem~14.1.5]{ref_model2}.

\section{The chain replacement of a poset}
\label{sec:chain-replacement}

Let \(P\) be a poset.  For \(x<y\) in \(P\), let \(\mathcal C_P(x,y)\) be the poset of strict chains
\[
\gamma=(x=x_0<x_1<\cdots <x_k=y)
\]
ordered by refinement: \(\gamma\leq \gamma'\) if \(\gamma\) is obtained from \(\gamma'\) by deleting intermediate vertices.  Thus the two-element chain \((x<y)\) is an initial object of \(\mathcal C_P(x,y)\).  If \(x\not<y\), put \(\mathcal C_P(x,y)=\varnothing\). If \(x<y\) in \(P\), and if
\[
\sigma=(\gamma_0\leq \gamma_1\leq \cdots\leq \gamma_m)
\]
is an \(m\)-simplex of \(N\mathcal C_P(x,y)\), write \(V(\gamma_i)\) for the set of vertices of the strict chain \(\gamma_i\), and put
\[
V_P(\sigma)=\bigcap_{i=0}^{m}V(\gamma_i) = V(\gamma_0).
\]
Thus \(V_P(\sigma)\) is the set of vertices of \(P\) through which every chain \(\gamma_i\) passes. It contains \(x\) and \(y\). We say that \(\sigma\) is indecomposable if
\[
V_P(\sigma)=\{x,y\}.
\]
Equivalently, there is no intermediate element \(z\in P\), \(x<z<y\), through which all the chains \(\gamma_i\) factor. Note that this does not mean that the open interval \(]x,y[\) is empty.

For each \(m\)-simplex \(\sigma\) of \(N\mathcal C_P(x,y)\), write \(\tau_\sigma\) for its unique nondegenerate reduction obtained by deleting repetitions:
\[
\sigma=(s_\sigma)^*(\tau_\sigma),\quad \tau_\sigma\hbox{ nondegenerate,}\quad s_\sigma:\Delta^m\to\Delta^{n_\sigma}, \quad m\geq n_\sigma.
\]
The pair \((s_\sigma,\tau_\sigma)\) is unique by the Eilenberg-Zilberg lemma \cite[Chapter~II, \S3, Proposition~1, pp.~26--27]{GabrielZisman1967}.

Define a flow \(\Ch(P)\) as follows.  Its set of states is \(P\), and
\[
\PP_{x,y}\Ch(P)=
\begin{cases}
|N\mathcal C_P(x,y)|,& x<y,\\
\varnothing,& x\not<y.
\end{cases}
\]
Composition is induced by concatenation of strict chains:
\[
\mathcal C_P(x,y)\times \mathcal C_P(y,z)
\longrightarrow
\mathcal C_P(x,z),
\qquad
(\gamma,\delta)\longmapsto \gamma*\delta.
\]
It is strictly associative.  There is an augmentation
\[
\epsilon_P:\Ch(P)\longrightarrow P
\]
which is the identity on states and collapses each nonempty path space to the unique path of the poset flow \(P\).

Since a map of \(\Poset^+\) sends a strict chain to a strict chain, we obtain the following definition:

\begin{definition}
	The assignment 
	\[
	P\mapsto \Ch(P)
	\]
	defines a functor \(\Ch:\Poset^+\to\Flow\) called the \textit{chain replacement functor}.
\end{definition}

\begin{lem}[Normal form] \label{lem:normal-form}
	Let \(P\) be a poset and let
	\[
	\sigma\in N\mathcal C_P(x,y)_m.
	\]
	There exist unique elements
	\[
	x=z_0<z_1<\cdots<z_r=y
	\]
	and a unique decomposition
	\[
	\sigma=\sigma_1*\cdots *\sigma_r
	\]
	such that
	\[
	\sigma_j\in N\mathcal C_P(z_{j-1},z_j)_m
	\]
	is indecomposable for every \(1\leq j\leq r\). Each \(\sigma_j\) has a unique Eilenberg--Zilber decomposition
	\[
	\sigma_j=(s_j)^*(\tau_j),
	\]
	where \(\tau_j\) is nondegenerate and indecomposable. Consequently, every simplex \(\sigma\) has a unique normal form
	\[
	\sigma
	=
	(s_1)^*(\tau_1)*\cdots*(s_r)^*(\tau_r)
	\]
	as a composite of degeneracies of nondegenerate indecomposable
	simplices.
\end{lem}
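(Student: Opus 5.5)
The plan is to cut \(\sigma\) at the vertices of \(V_P(\sigma)\). Write \(\sigma=(\gamma_0\leq\cdots\leq\gamma_m)\) and list \(V_P(\sigma)=V(\gamma_0)\) in increasing order as \(x=z_0<z_1<\cdots<z_r=y\).

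\textbf{Existence.} Each \(\gamma_i\) refines \(\gamma_0\), so it passes through every \(z_j\). Hence \(\gamma_i\) splits uniquely as a concatenation \(\gamma_i=\gamma_i^1*\cdots*\gamma_i^r\), where \(\gamma_i^j\in\mathcal C_P(z_{j-1},z_j)\) is the segment of \(\gamma_i\) between \(z_{j-1}\) and \(z_j\). Refinement is checked segmentwise: \(\gamma_i\leq\gamma_{i+1}\) holds if and only if \(\gamma_i^j\leq\gamma_{i+1}^j\) for every \(j\). Indeed, deleting vertices of \(\gamma_{i+1}\) never deletes a \(z_j\), since \(z_j\in V(\gamma_i)\). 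So \(\sigma_j=(\gamma_0^j\leq\cdots\leq\gamma_m^j)\) is an \(m\)-simplex of \(N\mathcal C_P(z_{j-1},z_j)\), and \(\sigma=\sigma_1*\cdots*\sigma_r\), because composition is concatenation applied levelwise on nerves. Each \(\sigma_j\) is indecomposable: \(V_P(\sigma_j)=V(\gamma_0^j)\), and the intermediate vertices of \(\gamma_0\) strictly between \(z_{j-1}\) and \(z_j\) are none, since \(\gamma_0\) has exactly the vertices \(z_0,\dots,z_r\).

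\textbf{Uniqueness.} Suppose \(\sigma=\sigma'_1*\cdots*\sigma'_{r'}\) with endpoints \(x=z'_0<\cdots<z'_{r'}=y\) and every \(\sigma'_j\) indecomposable. Then
\[
V_P(\sigma)=V(\gamma_0)=\bigcup_j V(\gamma'^{\,j}_0)=\bigcup_j\{z'_{j-1},z'_j\}=\{z'_0,\dots,z'_{r'}\},
\]
so the \(z'_j\) coincide with the \(z_j\). Given the cut points, the segments of each \(\gamma_i\) are determined, so \(\sigma'_j=\sigma_j\).

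\textbf{Eilenberg--Zilber part.} Each \(\sigma_j\) has a unique decomposition \(\sigma_j=(s_j)^*(\tau_j)\) by the Eilenberg--Zilber lemma cited before the statement. Moreover \(\tau_j\) is obtained by deleting repeated chains, so it has the same initial chain \(\gamma_0^j\). Hence \(V_P(\tau_j)=V_P(\sigma_j)\) and \(\tau_j\) is indecomposable. Combining this with the unique decomposition \(\sigma=\sigma_1*\cdots*\sigma_r\) gives the normal form and its uniqueness.

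The main obstacle is the uniqueness step, specifically the identity \(V_P(\sigma_1*\cdots*\sigma_r)=\bigcup_j V_P(\sigma_j)\). It rests on \(V(\gamma*\delta)=V(\gamma)\cup V(\delta)\) and on the fact that the initial element of a concatenated simplex is the concatenation of the initial elements. Beyond that, the argument is careful bookkeeping.
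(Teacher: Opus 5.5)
Your proof is correct and follows the same route as the paper. You cut \(\sigma\) at the common vertices \(V_P(\sigma)=V(\gamma_0)\), restrict every chain to the intervals \([z_{j-1},z_j]\), and apply Eilenberg--Zilber to each piece, noting that the nondegenerate reduction keeps the initial chain and hence stays indecomposable. Your version spells out details the paper leaves implicit: the segmentwise refinement check, and the identity \(V(\gamma_0)=\bigcup_j V(\gamma'^{\,j}_0)\), which forces any decomposition into indecomposables to cut at the same vertices.
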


\begin{proof}
	The elements \(z_0,\ldots,z_r\) are precisely the elements of
	\[
	V_P(\sigma)=\bigcap_{i=0}^m V(\gamma_i)=V(\gamma_0),
	\]
	written in increasing order. Every chain occurring in \(\sigma\) passes through each \(z_j\), so restriction to the intervals \([z_{j-1},z_j]\) yields the decomposition
	\[
	\sigma
	=
	\sigma|_{[z_0,z_1]}*\cdots*
	\sigma|_{[z_{r-1},z_r]}.
	\]
	These restrictions are indecomposable, and any decomposition into indecomposable simplices must cut at exactly the same common vertices. The decomposition is therefore unique.
	
	The Eilenberg--Zilber lemma gives, for each indecomposable simplex
	\(\sigma_j\), a unique pair \((s_j,\tau_j)\) such that
	\(\sigma_j=(s_j)^*(\tau_j)\) and \(\tau_j\) is nondegenerate. Moreover,
	\(\tau_j\) remains indecomposable. This proves the uniqueness of the
	normal form.
\end{proof}

\begin{proposition}\label{prop:chain-replacement}
	For every poset \(P\), the flow \(\Ch(P)\) is q-cofibrant and \(\epsilon_P:\Ch(P)\to P\) is a trivial q-fibration of flows.  Moreover, if \(P\subset Q\) is an inclusion of posets, the induced map \(\Ch(P)\to \Ch(Q)\) is a q-cofibration. More precisely, it is a relative cell complex.
\end{proposition}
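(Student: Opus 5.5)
I will treat the three assertions in turn: the trivial q-fibration claim, the relative cell complex claim, and then q-cofibrancy of \(\Ch(P)\), which follows from the second by taking \(\varnothing\subset P\).

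\textbf{Trivial q-fibration.} In the q-model structure, a map of flows is a trivial q-fibration iff it is surjective on states and each map \(\PP_{x,y}\) is a trivial q-fibration of spaces. The map \(\epsilon_P\) is the identity on states. If \(x\not<y\), both path spaces are empty. If \(x<y\), the target is a point and the source is \(|N\mathcal C_P(x,y)|\). Since \(\mathcal C_P(x,y)\) has the initial object \((x<y)\), its nerve is contractible. The realization is a CW complex, hence q-cofibrant and contractible, so the map to a point is a trivial q-fibration.

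\textbf{Cell structure.} I will attach the new cells of \(\Ch(Q)\) over \(\Ch(P)\) by induction on the dimension of indecomposable nondegenerate simplices.
\begin{itemize}
\item Start from \(\Ch(P)\sqcup (Q\setminus P)\), obtained by pushouts of \(C\).
\item At stage \(n\), for each nondegenerate indecomposable \(n\)-simplex \(\tau\) of some \(N\mathcal C_Q(x,y)\) not lying in \(N\mathcal C_P(x,y)\), attach one cell \(\Glob(\Sp^{n-1})\subset\Glob(\Di^n)\) along \(x,y\).
\end{itemize}
The attaching map sends \(\Sp^{n-1}\cong|\partial\Delta^n|\) to the previous stage. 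This is well defined because every face of \(\tau\) is a simplex of dimension \(<n\). By Lemma~\ref{lem:normal-form}, each such face is a unique composite of degeneracies of indecomposable nondegenerate simplices of dimension \(<n\), so it already lives in the previous stage.

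Let \(X\) be the colimit of these stages, with the induced map \(X\to\Ch(Q)\). The free semicategory generated by the cells has path space \(\PP_{x,y}X\) equal to a coproduct over sequences \(x=z_0<\dots<z_r=y\) of products of cells, modulo the boundary identifications. I claim \(X\to\Ch(Q)\) is an isomorphism.

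\textbf{Bijectivity.} On points, Lemma~\ref{lem:normal-form} gives the bijection. A point of \(|N\mathcal C_Q(x,y)|\) lies in the interior of a unique nondegenerate simplex \(\sigma\). The normal form writes \(\sigma\) as \(\sigma_1*\dots*\sigma_r\). Interior points of \(\sigma\) correspond to tuples of points of \(|\sigma_j|\) via the product homeomorphism \(|N(\mathcal C\times\mathcal D)|\cong|N\mathcal C|\times|N\mathcal D|\), and concatenation is the product map \(\prod_j\mathcal C_Q(z_{j-1},z_j)\to\mathcal C_Q(x,y)\). I expect this to be the delicate point. One must check that concatenation identifies \(\prod_j \mathcal C_Q(z_{j-1},z_j)\) with the subposet of chains through all the \(z_j\), which is an isomorphism onto a full, downward-closed-free piece. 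One must also check that the nondegenerate simplices of the product nerve correspond to products of degeneracies of nondegenerate indecomposables exactly as in the normal form, so that cells of \(X\) match cells of the realization.

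\textbf{Homeomorphism.} The map is a continuous bijection. Both sides carry the final topology with respect to their cells, the right side by \cite[Proposition~B.15 and B.16]{leftproperflow} as recalled in Section~\ref{sec:notations}. Each cell of one side maps onto a finite union of cells of the other continuously in both directions, so the bijection is a homeomorphism.

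This shows that \(\Ch(P)\to\Ch(Q)\) is a relative cell complex. Taking \(P=\varnothing\) then gives that \(\Ch(Q)\) is q-cofibrant.
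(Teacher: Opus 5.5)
Your proof is correct and follows the paper's argument: globular cells indexed by the indecomposable nondegenerate simplices, Lemma~\ref{lem:normal-form} for the attaching maps and for bijectivity, and the final topology of \(|N\mathcal C_Q(x,y)|\) for the homeomorphism. The only difference is organisational. You build \(\Ch(Q)\) relative to \(\Ch(P)\sqcup(Q\setminus P)\) and recover q-cofibrancy from \(\varnothing\subset Q\), while the paper builds \(\Ch(P)\) absolutely and then treats it as a subcomplex of \(\Ch(Q)\). One small slip: a trivial q-fibration of flows must be bijective on states, not merely surjective, since lifting against \(R\) forces injectivity. This does no harm here because \(\epsilon_P\) is the identity on states.
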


\begin{proof}
	We want to prove that the flow \(\Ch(P)\), which has cellular path spaces, is a cellular object of \(\Flow\). We first describe the cellular structure of \(\Ch(P)\). 
	
	Starting from the discrete flow \(G_{-1}(P)\) with state set \(P\), attach, in increasing dimension, for all \(x,y\in P\), one globular \(m\)-dimensional cell from \(x\) to \(y\) 
	\[
	\Glob(|\partial\Delta^m|)
	\longrightarrow
	\Glob(|\Delta^m|),
	\]
	which is a generating q-cofibration of flows up to the usual identifications \(|\partial\Delta^m|\cong \Sp^{m-1}\) and \(|\Delta^m|\cong \Di^m\), for every \textit{indecomposable} nondegenerate \(m\)-simplex \(\sigma\) of \(N\mathcal C_P(x,y)\), with the convention \(|\partial\Delta^0|=\varnothing\):
	\[
	\begin{tikzcd}
		\displaystyle\coprod_{\sigma} \Glob(|\partial\Delta^m|)\arrow[r]\arrow[d] & G^{m-1}(P) \arrow[d]\arrow[r]&\Ch(P) \arrow[d,equal]\\
		\displaystyle\coprod_{\sigma} \Glob(|\Delta^m|) \arrow[r,"\coprod_{\sigma} \chi_\sigma"]\arrow[rr,bend right=2em,"\coprod_\sigma \Psi_\sigma"' pos=0.45] & G^{m}(P)\cocartesian \arrow[r,dashed]&\Ch(P),
	\end{tikzcd}
	\] 
	where, for each indecomposable nondegenerate \(m\)-simplex \(\sigma\) of \(N\mathcal C_P(x,y)\), \(|\sigma|:|\Delta^m|\to |N\mathcal C_P(x,y)|\) induces the map \(\Psi_\sigma:\Glob(|\Delta^m|) \to \Ch(P)\) such that \(|\sigma|=\PP_{0,1}\Psi_\sigma\). The dashed map is induced by the universal property of the pushout. 
	
	We check that the attaching maps are well defined for \(m \geq 0\). We first introduce some notations. For a general indecomposable \(m\)-simplex \(\sigma\), write  
	\[
	\begin{tikzcd}[column sep=4em]
		\chi_\sigma : \Glob(|\Delta^m|)\arrow[r,"\Glob(|s_\sigma|)"]& \Glob(|\Delta^{n_\sigma}|) \arrow[r,"\chi_{\tau_\sigma}"] & G^{n_\sigma}(P)
	\end{tikzcd}
	\] 
	where the pair \((s_\sigma,\tau_\sigma)\) is the Eilenberg-Zilberg pair associated with \(\sigma\). For a general \(m\)-simplex \(\sigma\), if \(V_P(\sigma)=\{x=z_0<z_1<\cdots<z_r=y\}\) for \(r\geq 1\), then every chain occurring in \(\sigma\) passes through every \(z_j\). Each restriction \(\sigma|_{[z_{j-1},z_j]}\) is indecomposable and \(\sigma\) is therefore the composite, in the flow sense, and in a unique way, of indecomposable simplices:
	\(
	\sigma = \sigma|_{[z_{0},z_1]}*\cdots *\sigma|_{[z_{r-1},z_r]}
	\). 
	Write 
	\[
	\chi_\sigma = \chi_{\sigma|_{[z_{0},z_1]}}*\cdots *\chi_{\sigma|_{[z_{r-1},z_r]}}.
	\]
	Let \(\sigma=(\gamma_0,\cdots,\gamma_m)\) be an indecomposable nondegenerate \(m\)-simplex of \(N\mathcal C_P(x,y)\), and let \(\partial_i\sigma\) be one of its codimension-one faces. If \(i>0\), then \(\partial_i\sigma\) is still indecomposable, since \(\gamma_0\) remains one of its vertices. This implies that the characteristic map \(\chi_{\partial_i\sigma}\) is already defined. Otherwise, write
	\[
	V_P(\partial_0\sigma)=\{x=z_0<z_1<\cdots<z_r=y\}.
	\]
	Every chain occurring in \(\partial_0\sigma\) passes through every \(z_j\). Therefore \(\partial_0\sigma\) is the composite, in the flow sense, and in a unique way, of its restrictions
	\[
	(\partial_0\sigma)|_{[z_{j-1},z_j]},
	\qquad (1\leq j\leq r).
	\]
	These restrictions are indecomposable simplices of the nerves
	\[
	N\mathcal C_P(z_{j-1},z_j),
	\]
	possibly degenerate; after deleting repetitions, their nondegenerate reductions remain indecomposable by construction. Thus each such nondegenerate reduction has smaller dimension and has already been attached. This implies that the characteristic map 
	\(
	\chi_{\partial_0\sigma}
	\) is already defined. It follows that every face of \(\sigma\) is already present by the time the cell corresponding to \(\sigma\) is attached.	
	
	Let \(G^\omega(P) = \injlim G^m(P)\) be the flow obtained by the above cellular construction. We claim that
	\[
	G^\omega(P)=\Ch(P).
	\]
	By Lemma~\ref{lem:normal-form}, the map \(G^\omega(P) \to \Ch(P)\) induces injective maps on each path space. Indeed, all cells used in the construction are indecomposable nondegenerate simplices of the nerves \(N\mathcal C_P(x,y)\), and the composition is induced by concatenation of chains. The uniqueness of the normal form therefore implies the asserted injectivity.
	
	Conversely, let \(\tau\) be any nondegenerate simplex of \(N\mathcal C_P(x,y)\). If \(\tau\) is indecomposable, then it is one of the attached cells. If it is not indecomposable, write
	\[
	V_P(\tau)=\{x=z_0<z_1<\cdots<z_r=y\}.
	\]
	Then every chain occurring in \(\tau\) passes through each \(z_j\), and \(\tau\) is the composite of its indecomposable restrictions to the intervals \([z_{j-1},z_j]\). The nondegenerate reductions of these restrictions are still indecomposable. Hence \(\tau\) is obtained by composition from the attached cells. Therefore the map of flows \(G^\omega(P)\to\Ch(P)\) induces a bijection on the set of states and a continuous bijection \[f_{x,y}:\PP_{x,y}G^\omega(P)\longrightarrow \PP_{x,y}\Ch(P)\] for each pair of states \((x,y)\). 
	
	Each \(\PP_{x,y}\Ch(P)\) is the geometric realization of a simplicial nerve, namely \(N\mathcal C_P(x,y)\). Each \(m\)-simplex \(\sigma\) is indecomposable if and only if its nondegenerate reduction \(\tau_\sigma\) is indecomposable. When \(\sigma\) is indecomposable, whether or not it is degenerate, we obtain the commutative diagram of spaces 
	\[
	\begin{tikzcd}
		{|\Delta^m|}\arrow[d,"|s_\sigma|"'] \arrow[r,"|\sigma|"]& \PP_{x,y}\Ch(P)\arrow[r,"f_{x,y}^{-1}"]& \PP_{x,y}G^\omega(P)\\
		{|\Delta^{n_\sigma}|} \arrow[ru,"|\tau_\sigma|"'] \arrow[rru,bend right,"\PP_{0,1}\chi_{\tau_\sigma}" pos=0.7]&&
	\end{tikzcd}
	\]
	where the pair \((s_\sigma,\tau_\sigma)\) is the Eilenberg-Zilberg pair associated with \(\sigma\). We have proved that for any indecomposable simplex \(\sigma\), whether or not it is degenerate, the composite map 
	\[
	p_\sigma:|\Delta^m|\xlongrightarrow{|\sigma|} \PP_{x,y}\Ch(P) \xlongrightarrow{f_{x,y}^{-1}} \PP_{x,y}G^\omega(P)
	\]
	is equal to \(\PP_{0,1}\chi_\sigma\), and therefore is continuous. When \(\sigma\) is decomposable, write 
	\[
	\sigma=\sigma_1*\dots *\sigma_p,\quad p>1, \quad \sigma_1,\cdots,\sigma_p\hbox{ indecomposable}.
	\]
	Then 
	\[
	p_\sigma=p_{\sigma_1}*\cdots * p_{\sigma_p}
	\]
	is continuous. 
	
	The topology of \(\PP_{x,y}\Ch(P)\) is the final topology with respect to each \(|\sigma|:|\Delta^m|\to \PP_{x,y}\Ch(P)\) for \(\sigma\) running over all nondegenerate simplices of \(N\mathcal C_P(x,y)\). Since each composite 
	\[
	|\Delta^m|\xlongrightarrow{|\sigma|} \PP_{x,y}\Ch(P) \xlongrightarrow{f_{x,y}^{-1}} \PP_{x,y}G^\omega(P)
	\]
	is continuous, whether or not \(\sigma\) is indecomposable, we deduce that each \(f_{x,y}\) is a homeomorphism. We have proved that \(G^\omega(P)=\Ch(P)\). Hence \(\Ch(P)\) is a cell complex, and in particular it is q-cofibrant.
	
	For \(x<y\), the poset \(\mathcal C_P(x,y)\) has the initial object \((x<y)\). Consequently
	\[
	|N\mathcal C_P(x,y)|
	\]
	is contractible. The map
	\[
	\epsilon_P:\Ch(P)\longrightarrow P
	\]
	is the identity on states and sends each nonempty contractible path space to the singleton path space of the poset flow \(P\). If \(x\not<y\), both path spaces are empty. Thus every path-space map of \(\epsilon_P\) is a
	trivial q-fibration of spaces. Since the state map is a bijection, \(\epsilon_P\) is a trivial q-fibration of flows.
	
	It remains to prove the last assertion. Let \(P\subset Q\) be an inclusion of posets. The cellular construction just described for \(P\) is a subcomplex of the corresponding cellular construction for \(Q\). Indeed, a simplex of
	\(N\mathcal C_P(x,y)\) has the same set of vertices whether it is regarded in \(P\) or in \(Q\). Therefore \(\Ch(Q)\) is obtained from \(\Ch(P)\) by first adding the new states of \(Q\setminus P\), and then attaching the
	remaining globular cells of the cellular decomposition of \(\Ch(Q)\). Hence
	\[
	\Ch(P)\longrightarrow \Ch(Q)
	\]
	is a q-cofibration of flows.
\end{proof}

\begin{proposition}\label{prop:restriction-cofibration}
	Let \(P\subset Q\) be an inclusion of posets.  Then the canonical state-bijective map
	\[
	\Ch(P)\longrightarrow \Ch(Q)|_P
	\]
	is a q-cofibration between q-cofibrant flows.  More precisely, it is a relative cell complex.
\end{proposition}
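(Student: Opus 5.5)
We mimic the cellular construction from the proof of Proposition~\ref{prop:chain-replacement}, now performed \emph{relative} to \(\Ch(P)\) and restricted to the full subflow on \(P\). First, \(\Ch(Q)|_P\) is q-cofibrant by exhibiting it as a cell complex in the same way as \(\Ch(Q)\). Start from the discrete flow on \(P\). For every pair \(x,y\in P\) and every indecomposable nondegenerate \(m\)-simplex \(\sigma\) of \(N\mathcal C_Q(x,y)\), attach one globular cell \(\Glob(|\partial\Delta^m|)\to\Glob(|\Delta^m|)\), in increasing dimension. Here ``indecomposable'' means in the sense of \(Q\), i.e.\ \(V_Q(\sigma)=\{x,y\}\).

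The key point is a closure property of the full subflow. If \(x,y\in P\) and \(\sigma\in N\mathcal C_Q(x,y)\) is arbitrary, its normal form from Lemma~\ref{lem:normal-form} (applied in \(Q\)) may cut at vertices \(z_j\in Q\setminus P\). The pieces are then simplices between states outside \(P\), which are not available in \(\Ch(Q)|_P\) as a composite. So the generating cells of \(\Ch(Q)|_P\) must be chosen differently. I would call \(\sigma\) \emph{\(P\)-indecomposable} if \(V_Q(\sigma)\cap P=\{x,y\}\), and attach one cell per nondegenerate \(P\)-indecomposable simplex. Every simplex of \(N\mathcal C_Q(x,y)\) with \(x,y\in P\) then decomposes uniquely as a concatenation of \(P\)-indecomposable simplices, by cutting at \(V_Q(\sigma)\cap P=V(\gamma_0)\cap P\). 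This is a \(P\)-relative version of Lemma~\ref{lem:normal-form}, with the same proof.

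Well-definedness of the attaching maps is checked as before. For \(i>0\), the face \(\partial_i\sigma\) keeps \(\gamma_0\), so it stays \(P\)-indecomposable. The face \(\partial_0\sigma\) decomposes, by cutting at \(V(\gamma_1)\cap P\), into \(P\)-indecomposable simplices whose nondegenerate reductions have smaller dimension and hence are already attached. Let \(G\) be the resulting cell complex. The canonical map \(G\to\Ch(Q)|_P\) is injective on path spaces by uniqueness of the \(P\)-normal form, and surjective since every nondegenerate simplex decomposes. It is a homeomorphism by the same final-topology argument as in Proposition~\ref{prop:chain-replacement}: each \(|\sigma|\) lifts continuously through the inverse bijection, being a composite of characteristic maps.

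It remains to compare with \(\Ch(P)\). A simplex of \(N\mathcal C_P(x,y)\) has all its vertices in \(P\), so \(V_P(\sigma)=V_Q(\sigma)\cap P\). Hence its \(P\)-indecomposability coincides with its indecomposability in \(P\), and the cells of \(\Ch(P)\) are exactly the cells of \(G\) indexed by simplices all of whose chains lie in \(P\). This set of cells is closed under the face analysis above: faces of chains in \(P\) stay in \(P\). Therefore \(\Ch(P)\) is a subcomplex of \(G\), attached in the same dimension order, and \(\Ch(P)\to\Ch(Q)|_P\) is obtained by attaching the remaining cells. This makes it a relative cell complex, in particular a q-cofibration, with no new states added since both flows have state set \(P\).

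The main obstacle I expect is the choice of cells in the first step. The naive choice of \(Q\)-indecomposable simplices fails, because decompositions through states of \(Q\setminus P\) leave the full subflow. Once cells are indexed by \(P\)-indecomposable simplices, checking that the \(P\)-relative normal form is unique and compatible with faces, especially the \(\partial_0\) face, is the delicate bookkeeping.
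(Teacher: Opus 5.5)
Your proposal is correct and is essentially the paper's proof: the same cells (nondegenerate \(P\)-indecomposable simplices of \(N\mathcal C_Q(x,y)\)), the same \(P\)-relative normal form obtained by cutting at \(V(\gamma_0)\cap P\), the same \(\partial_0\)-face analysis and the same final-topology argument. The only difference is organisational: you build \(\Ch(Q)|_P\) from the discrete flow and then recognise \(\Ch(P)\) as a subcomplex, whereas the paper starts directly from \(G^{-1}(P,Q)=\Ch(P)\). One wording point: since the inclusion is not assumed order-reflecting, the subcomplex must be indexed by simplices of \(N\mathcal C_P(x,y)\), that is, by chains that are strict for the order of \(P\). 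It should not be described as the simplices of \(N\mathcal C_Q(x,y)\) whose chains have all their vertices in \(P\).
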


\begin{proof}
	The state sets of \(\Ch(P)\) and \(\Ch(Q)|_P\) are both equal to \(P\). For \(x,y\in P\), the map on path spaces is the canonical map
	\[
	|N\mathcal C_P(x,y)|
	\longrightarrow
	|N\mathcal C_Q(x,y)|,
	\]
	where the source is empty if \(x\not<y\) in \(P\). We shall construct \(\Ch(Q)|_P\) from \(\Ch(P)\) by a sequence of cell attachments. The proof is only sketched since it is similar to that of Proposition~\ref{prop:chain-replacement}.
	
	Let \(x,y\in P\), and let
	\[
	\sigma=(\gamma_0<\gamma_1<\cdots<\gamma_m)
	\]
	be a nondegenerate \(m\)-simplex of \(N\mathcal C_Q(x,y)\). Put
	\[
	V_P(\sigma)
	=
	P\cap \bigcap_{i=0}^{m}V(\gamma_i) = P\cap V(\gamma_0).
	\]
	Thus \(V_P(\sigma)\) is the set of vertices of \(P\) through which every chain \(\gamma_i\) passes. It contains \(x\) and \(y\). We say that \(\sigma\) is \textit{\(P\)-indecomposable} if
	\[
	V_P(\sigma)=\{x,y\}.
	\]
	Equivalently, no intermediate state \(z\in P\), \(x<z<y\), occurs in every chain \(\gamma_i\). Thus if \(\sigma\) is indecomposable, then it is \(P\)-indecomposable. The converse implication is false in general unless \(P=Q\).
	
	Starting from \(G^{-1}(P,Q)=\Ch(P)\), attach, in increasing dimension, one globular \(m\)-cell from \(x\) to \(y\) for every \(P\)-indecomposable nondegenerate \(m\)-simplex of \(N\mathcal C_Q(x,y)\) which is not already a simplex of \(N\mathcal C_P(x,y)\):
	\[
	\begin{tikzcd}
		\displaystyle\coprod_{\sigma} \Glob(|\partial\Delta^m|)\arrow[r]\arrow[d] & G^{m-1}(P,Q) \arrow[d]\arrow[r]&\Ch(Q) \arrow[d,equal]\\
		\displaystyle\coprod_{\sigma} \Glob(|\Delta^m|) \arrow[r]\arrow[rr,bend right=2em,"\coprod_\sigma \Psi_\sigma"' pos=0.45] & G^{m}(P,Q)\cocartesian \arrow[r,dashed]&\Ch(Q),
	\end{tikzcd}
	\]
	where, for each \(P\)-indecomposable nondegenerate \(m\)-simplex \(\sigma\) of \(N\mathcal C_Q(x,y)\backslash N\mathcal C_P(x,y)\), \(|\sigma|:|\Delta^m|\to |N\mathcal C_Q(x,y)|\) induces the map \(\Psi_\sigma:\Glob(|\Delta^m|) \to \Ch(Q)\) such that \(|\sigma|=\PP_{0,1}\Psi_\sigma\). The dashed map is induced by the universal property of the pushout. 
	
	We check that the attaching map of each such cell is already defined. Let \(\sigma\) be a \(P\)-indecomposable nondegenerate \(m\)-simplex of \(N\mathcal C_Q(x,y)\), not belonging to \(N\mathcal C_P(x,y)\), and let \(\partial_i\sigma\) be a codimension-one face. If \(\partial_i\sigma\) belongs to \(N\mathcal C_P(x,y)\), then it is already present in \(\Ch(P)\). If \(\partial_i\sigma\) is \(P\)-indecomposable and does not belong to \(N\mathcal C_P(x,y)\), then it has dimension \(m-1\), and so it has already been attached. It remains to consider the case where \(\partial_i\sigma\) is \(P\)-decomposable (which implies \(i=0\)). Write
	\[
	V_P(\partial_0\sigma)=\{x=z_0<z_1<\cdots<z_r=y\}.
	\]
	Every chain occurring in \(\partial_0\sigma\) passes through each \(z_j\). Hence \(\partial_0\sigma\) is the composite, in the flow sense, of its restrictions to the intervals
	\[
	[z_{j-1},z_j]
	\qquad (1\leq j\leq r).
	\]
	Each restriction is a possibly degenerate simplex of
	\[
	N\mathcal C_Q(z_{j-1},z_j).
	\]
	After deleting repetitions, its nondegenerate reduction is \(P\)-indecomposable. Indeed, a common intermediate vertex of \(P\) in such a restriction would be an element of \(V_P(\partial_0\sigma)\) strictly between two consecutive elements \(z_{j-1}\) and \(z_j\), which is impossible. Thus each nondegenerate reduction either lies in \(N\mathcal C_P(z_{j-1},z_j)\), and is already present in \(\Ch(P)\), or is one of the \(P\)-indecomposable cells already attached in lower dimension. Therefore every face of \(\sigma\) is already present, and the globular attachment is well defined.
	
	Let \(G^\omega(P,Q) = \injlim G^m(P,Q)\) be the flow obtained after all these cells have been attached. We
	claim that
	\[
	G^\omega(P,Q)=\Ch(Q)|_P.
	\]
	The \(P\)-relative analogue of Lemma~\ref{lem:normal-form}, obtained by replacing indecomposable simplices with \(P\)-indecomposable simplices, shows that the canonical morphism
	\[
	G^\omega(P,Q)\longrightarrow \Ch(Q)|_P
	\]
	induces injective maps on all path spaces. Indeed, the attached cells correspond precisely to \(P\)-indecomposable nondegenerate simplices of \(N\mathcal C_Q(x,y)\), with \(x,y\in P\), and composition corresponds to concatenation of chains in \(Q\). The uniqueness of the normal form therefore implies the asserted injectivity.
	
	Conversely, let \(\tau\) be a nondegenerate simplex of \(N\mathcal C_Q(x,y)\) with \(x,y\in P\). If \(\tau\) belongs to \(N\mathcal C_P(x,y)\), then it is already present in \(\Ch(P)\). If \(\tau\) is \(P\)-indecomposable and does
	not belong to \(N\mathcal C_P(x,y)\), then it has been attached. Otherwise, \(\tau\) is \(P\)-decomposable. Writing
	\[
	V_P(\tau)=\{x=z_0<z_1<\cdots<z_r=y\},
	\]
	the simplex \(\tau\) is the composite of its restrictions to the intervals \([z_{j-1},z_j]\). The nondegenerate reductions of these restrictions are \(P\)-indecomposable by the same argument as above; hence they either lie in \(\Ch(P)\) or are among the attached cells. Thus \(\tau\) belongs to the path space of \(G^\omega(P,Q)\). Therefore every simplex of every \(N\mathcal C_Q(x,y)\), with \(x,y\in P\), belongs to \(G^\omega(P,Q)\), and so, using a final topology argument like in Proposition~\ref{prop:chain-replacement}, we deduce the equality
	\[
	G^\omega(P,Q)=\Ch(Q)|_P.
	\]
	Thus
	\[
	\Ch(P)\longrightarrow \Ch(Q)|_P
	\]
	is a relative cell complex. In particular, it is a q-cofibration of flows. Since \(\Ch(P)\) is a cell complex by Proposition~\ref{prop:chain-replacement} and \(\Ch(Q)|_P\) has just been obtained from \(\Ch(P)\) by cell attachments, both flows are q-cofibrant.
\end{proof}

\begin{thm}\label{thm:order-reflecting-effect}
	Let \(P\subset Q\) be an inclusion of posets.  Then the following conditions are equivalent:
	\begin{enumerate}[label=\arabic*.]
		\item The inclusion \(P\subset Q\) is order-reflecting.
		\item The canonical state-bijective map
		\[
		\Ch(P)\longrightarrow \Ch(Q)|_P
		\]
		is a trivial q-cofibration between q-cofibrant flows.
	\end{enumerate} 
\end{thm}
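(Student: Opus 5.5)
By Proposition~\ref{prop:restriction-cofibration}, the map \(\Ch(P)\to \Ch(Q)|_P\) is always a q-cofibration between q-cofibrant flows. So the equivalence reduces to one claim: this map is a weak equivalence if and only if \(P\subset Q\) is order-reflecting. Since the map is bijective on states, being a weak equivalence of flows means that each path-space map
\[
|N\mathcal C_P(x,y)|\longrightarrow |N\mathcal C_Q(x,y)|,\qquad x,y\in P,
\]
is a weak homotopy equivalence. I would analyse this map pair by pair, splitting into cases according to the order relation between \(x\) and \(y\) in \(P\) and in \(Q\).

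For \((1)\Rightarrow(2)\), assume the inclusion is order-reflecting and fix \(x,y\in P\).
\begin{itemize}
\item If \(x<_P y\), then also \(x<_Q y\). Both posets \(\mathcal C_P(x,y)\) and \(\mathcal C_Q(x,y)\) have the initial object \((x<y)\), so both realizations are contractible, exactly as in the proof of Proposition~\ref{prop:chain-replacement}. A map between contractible spaces is a homotopy equivalence.
\item If \(x\not<_P y\), then \(x\not<_Q y\). Otherwise \(x\leq_Q y\) with \(x\neq y\) would give \(x\leq_P y\) by order reflection, hence \(x<_P y\), a contradiction. So both path spaces are empty, and the map is the identity of \(\varnothing\).
\end{itemize}
Hence every path-space map is a weak equivalence, and together with Proposition~\ref{prop:restriction-cofibration} the map is a trivial q-cofibration.

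For \((2)\Rightarrow(1)\), I would argue by contraposition. Suppose there are \(x,y\in P\) with \(x\leq_Q y\) but \(x\not\leq_P y\). Then \(x\neq y\), so \(x<_Q y\) while \(x\not<_P y\). In that case the source \(|N\mathcal C_P(x,y)|\) is empty, whereas the target \(|N\mathcal C_Q(x,y)|\) contains at least the vertex \((x<y)\) and is nonempty (indeed contractible). A map from \(\varnothing\) to a nonempty space is not a weak equivalence, since it fails to be bijective on \(\pi_0\). Therefore \(\Ch(P)\to\Ch(Q)|_P\) is not a weak equivalence, contradicting \((2)\).

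I do not expect a genuine obstacle. The one point to state carefully is that a weak equivalence of flows in the q-model structure between state-bijective flows is exactly a state bijection that induces weak homotopy equivalences on all path spaces. This is the standard characterization, and I would cite it from \cite{QHMmodel}.
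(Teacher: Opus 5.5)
Your proposal is correct and follows the paper's proof essentially verbatim: Proposition~\ref{prop:restriction-cofibration} gives the q-cofibration between q-cofibrant flows, and the pairwise case analysis settles both directions. The cases are contractible chain posets with initial object \((x<y)\), or both path spaces empty under order reflection, and for the converse, an empty source against a nonempty target.
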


\begin{proof}
	Assume that the inclusion \(P\subset Q\) is order-reflecting. By Proposition~\ref{prop:restriction-cofibration}, the map \(\Ch(P)\to \Ch(Q)|_P\) is a q-cofibration. It remains only to verify that this q-cofibration is a weak equivalence. If \(x\not<y\) in \(P\), then \(x\not<y\) in \(Q\), because \(P\subset Q\) is order-reflecting.  Both path spaces are therefore empty.  If \(x<y\) in \(P\), then also \(x<y\) in \(Q\), and both spaces
	\[
	|N\mathcal C_P(x,y)|,
	\qquad
	|N\mathcal C_Q(x,y)|
	\]
	are contractible because the chain posets have initial objects.  Hence the map is a weak equivalence of flows. 
	
	Assume now that the inclusion \(P\subset Q\) is not order-reflecting. Then there exist \(x,y\in P\) such that \(x\not<y\) in \(P\) and \(x<y\) in \(Q\). Then \(\PP_{x,y}\Ch(P)=\varnothing\) and \(\PP_{x,y}\Ch(Q)|_P\) is nonempty. Thus the map \(\Ch(P)\to \Ch(Q)|_P\) cannot be a weak equivalence of the q-model structure of \(\Flow\).
\end{proof}

\section{A categorical lemma on full subflows and pushouts}
\label{sec:subflow-lemma}

\begin{lem} \label{lem:full-subflow-pushout}
	Consider a pushout diagram of flows
	\[
	\begin{tikzcd}
		V \arrow[r,"\alpha"] \arrow[d,"\phi"'] & X \arrow[d,"\psi"] \\
		U \arrow[r,"\theta"'] & Y \cocartesian .
	\end{tikzcd}
	\]
	Put
	\[
	U' = U|_{\phi(V^0)}
	\qquad\hbox{and}\qquad
	Y' = Y|_{\psi(X^0)} .
	\]
	Then the induced commutative diagram of flows
	\[
	\begin{tikzcd}
		V \arrow[r,"\alpha"] \arrow[d,"\phi"'] & X \arrow[d] \\
		U' \arrow[r] & Y' \cocartesian
	\end{tikzcd}
	\]
	is a pushout diagram.
\end{lem}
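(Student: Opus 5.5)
Our approach is to check the universal property of the pushout directly. We use the fact that \(Y'\) is a full subflow of \(Y\) and that \(\theta(\phi(V^0))\subseteq\psi(X^0)\). The first step is to verify that the square is well defined. Since \(\theta\circ\phi=\psi\circ\alpha\), the map \(\theta\) sends \(U'\) into \(Y'\): on states, \(\theta(\phi(v))=\psi(\alpha(v))\in\psi(X^0)\). Hence \(\theta|_{U'}:U'\to Y'\) makes sense, and by fullness of \(Y'\) all path spaces between such states are taken from \(Y\). Likewise \(\psi\) factors through \(Y'\).

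Now let \(Z\) be a flow with maps \(g:X\to Z\) and \(h:U'\to Z\) such that \(g\alpha=h\phi\). We want a unique \(k:Y'\to Z\). The plan is to extend \(h\) to all of \(U\) by enlarging \(Z\). Define a flow \(\widehat Z\) with \(\widehat Z^0=Z^0\sqcup(U^0\setminus\phi(V^0))\). Its path spaces are those of \(Z\) between old states, empty between pairs involving new states, and otherwise generated freely. A simpler choice is to take \(\widehat Z\) to be the pushout of \(U'\to Z\) along the inclusion \(U'\subset U\), computed in \(\Flow\). Then \(\widehat Z\) receives \(Z\to\widehat Z\) and \(U\to \widehat Z\), compatible with \(g\) and \(h\). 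The universal property of the original pushout \(Y\) then gives \(\widehat k:Y\to\widehat Z\). Restricting \(\widehat k\) to the full subflow \(Y'\) lands in \(\widehat Z|_{Z^0}\).

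The key claim, and the main obstacle, is that \(Z\to \widehat Z|_{Z^0}\) is an isomorphism: gluing \(U\) along its full subflow \(U'\) creates no new paths between old states. We would prove this by describing the pushout \(Z\cup_{U'}U\) explicitly. A path between two states of \(Z\) in the free construction is a formal composite \(z_0*u_1*z_1*\cdots\) alternating between paths of \(Z\) and paths of \(U\), modulo the relations identifying paths of \(U'\) with their images under \(h\). We then show that every path between old states reduces to a path of \(Z\).

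Consider any maximal segment of \(U\)-paths that starts and ends at states of \(\phi(V^0)\) (new states are not in \(Z\)). Composing that segment in \(U\) gives a single path of \(U\) between states of \(U'\). By fullness this path lies in \(U'\), so it can be replaced by its image under \(h\) in \(Z\). Thus every formal composite reduces to a composite in \(Z\). The reduction is compatible with the relations, so this defines an inverse to \(Z\to\widehat Z|_{Z^0}\). Topologically, the path spaces of the pushout are colimits of products of path spaces, and the reduction maps are continuous because they are built from the composition maps of \(U\) and \(Z\).

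Granting the claim, \(k:=\widehat k|_{Y'}\) followed by the inverse isomorphism gives the required map \(Y'\to Z\), and it agrees with \(g\) and \(h\). For uniqueness, run the same reduction in \(Y\) itself. Every path of \(Y\) between states of \(\psi(X^0)\) is a composite of images of paths of \(X\) and paths of \(U\), and by the same fullness argument these reduce to composites of images of paths of \(X\) and of \(U'\). So \(Y'\) is generated by the images of \(X\) and \(U'\), and \(k\) is determined by \(g\) and \(h\). The same argument in set-theoretic form also handles states, since \(Y'^0=\psi(X^0)\).
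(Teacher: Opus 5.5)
Your proof is correct, but it runs in the opposite direction from the paper's. The paper verifies the universal property of \(Y'\) directly on the word description of the general pushout \(Y\). It first computes \((X\amalg_V U')^0\cong\psi(X^0)\) by a zigzag argument. It then observes that maximal blocks of \(U\)-letters in a word with endpoints in \(\psi(X^0)\) have endpoints in \(\phi(V^0)\), so they compose to paths of \(U'\). Finally it defines \(\ell\) word by word, and obtains Proposition~\ref{prop:full-subflow-pushout} afterwards as the special case \(V=U|_S\).

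You prove that special case first. Your key claim \(Z\cong\widehat Z|_{Z^0}\), for \(\widehat Z=Z\amalg_{U'}U\), is exactly Proposition~\ref{prop:full-subflow-pushout} with \(S=\phi(V^0)\). You then deduce the general lemma formally. The cocone formed by \(X\to Z\to\widehat Z\) and \(U\to\widehat Z\) induces \(\widehat k:Y\to\widehat Z\). Its restriction to \(Y'\), followed by the inverse isomorphism, gives the factorization. Uniqueness follows because \(Y'\) is generated by the images of \(X\) and \(U'\).

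What your route buys is this. The only well-definedness and continuity check on words happens in a pushout whose left leg \(U'\subset U\) is injective on states, where the only state identifications are \(s\sim h(s)\). You never need the zigzag computation of the state set, since the state part of \(k\) comes for free from \(\widehat k\). It also shows that the lemma and Proposition~\ref{prop:full-subflow-pushout} are equivalent. The paper's route is a single direct pass over the general pushout.

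One small precision, which applies equally to the paper's text. Since \(h^0\) need not be injective (and in \(Y\), \(\alpha^0\) need not be injective), two consecutive \(U\)-letters in a maximal run may end and start at distinct states of \(\phi(V^0)\) that are identified only in the pushout. Such letters are not composable in \(U\). You should therefore cut each word at every state of \(\phi(V^0)\). Each resulting piece then has interior states in \(U^0\setminus\phi(V^0)\), which are identified with nothing, so the pieces really do compose in \(U\) to paths of the full subflow \(U'\). Your first tentative \emph{free} definition of \(\widehat Z\) is not used and can simply be discarded.
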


\begin{proof}
	The proof uses the standard description of pushouts of enriched categories by finite composable words. Since flows are \(\Top\)-enriched small semicategories, the same construction applies verbatim. The set of states of \(Y\) is the pushout of sets
	\[
	Y^0 = U^0 \amalg_{V^0} X^0 .
	\]
	The map \(\theta:U\to Y\) sends every state of \(\phi(V^0)\) into \(\psi(X^0)\), because
	\[
	\theta(\phi(v))=\psi(\alpha(v))
	\qquad (v\in V^0).
	\]
	Hence \(\theta\) restricts to a morphism
	\[
	U'=U|_{\phi(V^0)} \longrightarrow Y'=Y|_{\psi(X^0)} .
	\]
	We first identify the state set of \(Y'\). The state set of the pushout
	\[
	X\amalg_V U'
	\]
	is
	\[
	X^0 \amalg_{V^0} \phi(V^0).
	\]
	The canonical map
	\[
	X^0 \amalg_{V^0} \phi(V^0) \longrightarrow Y^0
	\]
	has image \(\psi(X^0)\). It is injective onto this image. Indeed, if two states belonging to \(X^0\amalg \phi(V^0)\) become identified in the pushout \(U^0\amalg_{V^0}X^0\), then they are connected by a finite zigzag using only relations of the form
	\[
	\phi(v) \sim \alpha(v)
	\qquad (v\in V^0).
	\]
	Every state of \(U^0\) occurring in such a zigzag must therefore belong to \(\phi(V^0)\). Hence the same zigzag already lies in \(X^0\amalg \phi(V^0)\). Thus
	\[
	(X\amalg_V U')^0 \cong \psi(X^0)=Y'^0 .
	\]
	Let \(Z\) be a flow. Consider a commutative diagram of flows
		\[
		\begin{tikzcd}
				V \arrow[r,"\alpha"] \arrow[d,"\phi"'] & X \arrow[d]\arrow[ddr,bend left,"a"]\\
				U' \arrow[drr,bend right,"b"']\arrow[r] & Y'\arrow[dotted,"\ell",rd] \\
				&& Z\, .
			\end{tikzcd}
		\]
	An execution path of \(Y\) is represented by a finite composable word whose letters are execution paths of \(X\) or of \(U\), modulo the relations expressing composition in \(X\), composition in \(U\), and the identifications coming from \(V\). Consider such a word whose initial and final states belong to \(\psi(X^0)\), so that it represents an execution path of \(Y'\). We claim that every maximal consecutive block of \(U\)-letters in this word has initial and final states in \(\phi(V^0)\). Indeed, if such a block is preceded or followed by an \(X\)-letter, then the intermediate state belongs both to the image of \(U^0\) and to the image of \(X^0\) in \(Y^0\). Hence it is represented by a state of the form \(\phi(v)\) for some \(v\in V^0\). If the block occurs at the beginning or at the end of the word, the same argument applies because the whole word has initial and final states in \(\psi(X^0)\). Thus the endpoints of every maximal \(U\)-block lie in \(\phi(V^0)\). After composing such a block inside \(U\), it therefore becomes a single execution path of \(U\) whose endpoints lie in \(\phi(V^0)\), hence an execution path of the full subflow \(U'\). Consequently every execution path of \(Y'\) is represented by a word alternating between execution paths of \(X\) and execution paths of \(U'\). We now define \(\ell\) on such a representative word by applying \(a\) to the \(X\)-letters, applying \(b\) to the \(U'\)-letters, and composing the resulting execution paths in \(Z\). This definition is compatible with the defining relations: the relations coming from composition in \(X\) are respected because \(a\) is a morphism of flows, the relations coming from composition in \(U'\) are respected because \(b\) is a morphism of flows, and the identifications coming from \(V\) are respected because \(a\alpha=b\phi\). Hence \(\ell\) is well defined. Continuity follows from the quotient topology in the word construction of the pushout. The morphism \(\ell\) is uniquely determined by its restrictions to \(X\) and \(U'\), together with its compatibility with composition. Therefore \(\ell\) is unique. This proves the universal property of the pushout, and hence the square
	\[
	\begin{tikzcd}
		V \arrow[r,"\alpha"] \arrow[d,"\phi"'] & X \arrow[d] \\
		U' \arrow[r] & Y'
	\end{tikzcd}
	\]
	is a pushout square of flows.
\end{proof}

\begin{proposition}[Full-subflow pushout lemma]\label{prop:full-subflow-pushout}
	Let \(U\) be a flow and let \(S\subset U^{0}\). Let
	\[
	i:U|_{S}\longrightarrow U
	\]
	be the inclusion of the full subflow on \(S\). Let
	\[
	g:U|_{S}\longrightarrow X
	\]
	be a morphism of flows, and form the pushout
	\[
	\begin{tikzcd}
		U|_{S} \arrow[r,"g"] \arrow[d,"i"'] & X \arrow[d,"\iota"]\\
		U \arrow[r] & Y \cocartesian .
	\end{tikzcd}
	\]
	Then the canonical morphism
	\[
	X\longrightarrow Y|_{\iota(X^{0})}
	\]
	is an isomorphism of flows. In particular, for all states
	\(\alpha,\beta\in X^{0}\), the induced map
	\[
	\PP_{\alpha,\beta}X
	\longrightarrow
	\PP_{\iota(\alpha),\iota(\beta)}Y
	\]
	is a homeomorphism.
\end{proposition}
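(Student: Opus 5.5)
The plan is to apply Lemma~\ref{lem:full-subflow-pushout} with \(V=U|_S\), \(\alpha=g\), \(\phi=i\) and \(\psi=\iota\). In this situation \(\phi(V^0)=S\), so
\[
U'=U|_{\phi(V^0)}=U|_S=V,
\]
and the restricted map \(\phi:V\to U'\) is the identity of \(U|_S\). This is exactly where fullness of the subflow is used: the full subflow of \(U\) on \(S\) is \(U|_S\) itself, with the same path spaces.

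Lemma~\ref{lem:full-subflow-pushout} then says that the square
\[
\begin{tikzcd}
U|_S \arrow[r,"g"] \arrow[d,equal] & X \arrow[d]\\
U|_S \arrow[r] & Y|_{\iota(X^0)}
\end{tikzcd}
\]
is a pushout of flows. A pushout along an identity morphism is the other leg, up to canonical isomorphism. Concretely, the pair \((\mathrm{id}_X,g)\) is a cocone on this span. The universal property therefore gives \(\ell:Y|_{\iota(X^0)}\to X\) with \(\ell\circ c=\mathrm{id}_X\), where \(c:X\to Y|_{\iota(X^0)}\) is the canonical morphism. Both \(c\circ\ell\) and the identity of \(Y|_{\iota(X^0)}\) are compatible with the two legs of the pushout. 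By uniqueness, \(c\circ\ell=\mathrm{id}\), so \(c\) is an isomorphism of flows.

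The \enquote{in particular} statement then follows directly. An isomorphism of flows induces homeomorphisms on all path spaces, so \(\PP_{\alpha,\beta}X\cong \PP_{\iota(\alpha),\iota(\beta)}(Y|_{\iota(X^0)})\). Since the restriction is full, the right-hand side equals \(\PP_{\iota(\alpha),\iota(\beta)}Y\). The composite is the map induced by \(\iota\), because \(c\) is \(\iota\) corestricted.

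The main point requiring care is checking that the morphism \(X\to Y|_{\iota(X^0)}\) produced by Lemma~\ref{lem:full-subflow-pushout} really is the canonical corestriction of \(\iota\). It is, by construction in that lemma, since the right vertical map there is induced by \(\psi\). A second point is that \(\iota\) need not be injective on states, but this causes no trouble. It is not injective precisely when \(g\) identifies states of \(S\), and the pushout description handles that case. The isomorphism statement is about \(X\) versus the full subflow on \(\iota(X^0)\), and the state bijection is already given by the state computation in Lemma~\ref{lem:full-subflow-pushout}. With \(\phi\) the identity on \(V^0\), that computation gives \(X^0\amalg_{S}S=X^0\).
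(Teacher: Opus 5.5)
Your argument is correct and is the paper's proof, which consists only of applying Lemma~\ref{lem:full-subflow-pushout} with \(V=U|_S\); you make explicit that \(U'=U|_S\), so the left leg becomes an identity and a pushout along an identity is an isomorphism. One side remark is wrong, though your proof does not use it: \(\iota\) is always injective on states, because a pushout in \(\Set\) of the injection \(S\subset U^0\) is injective and your own isomorphism forces this, whereas it is \(U\to Y\), not \(\iota\), that identifies states when \(g\) does.
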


\begin{proof}
	Apply Lemma~\ref{lem:full-subflow-pushout} with \(V=U|_S\).
\end{proof}

\begin{cor}
	Let \(P\subset Q\) be an order-reflecting inclusion of posets. Consider a pushout of flows 
	\[
	\begin{tikzcd}
		P \arrow[r] \arrow[d] & X \arrow[d,"\iota"]\\
		Q \arrow[r] & Y \cocartesian .
	\end{tikzcd}
	\]
	For all states
	\(\alpha,\beta\in X^{0}\), the induced map
	\[
	\PP_{\alpha,\beta}X
	\longrightarrow
	\PP_{\iota(\alpha),\iota(\beta)}Y
	\]
	is a homeomorphism.
\end{cor}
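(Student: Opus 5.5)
The plan is to reduce the corollary directly to Proposition~\ref{prop:full-subflow-pushout}, taking \(U=Q\) and \(S=P\subset Q=U^0\). The key observation is that an order-reflecting inclusion of posets, viewed as a map of poset flows, is exactly the inclusion of a full subflow. Concretely, I would check that \(Q|_P\) and \(P\) are the same flow:
\begin{itemize}
\item both have state set \(P\);
\item for \(x,y\in P\), \(\PP_{x,y}(Q|_P)=\PP_{x,y}Q\) is a singleton if \(x<_Q y\) and empty otherwise;
\item \(\PP_{x,y}P\) is a singleton if \(x<_P y\) and empty otherwise.
\end{itemize}
Since the inclusion is order-reflecting (and is a map of posets), \(x<_P y\) holds if and only if \(x<_Q y\) for \(x,y\in P\). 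So the path spaces coincide. Composition agrees as well, since between singletons and empty sets there is only one possible composition law. Hence the inclusion \(P\to Q\) is, up to this identification, the canonical inclusion \(i:Q|_P\to Q\).

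Next I would rewrite the given pushout square with left vertical map \(i:Q|_P\to Q\) and top map \(g:Q|_P=P\to X\). This is literally the pushout of Proposition~\ref{prop:full-subflow-pushout}. That proposition gives that \(X\to Y|_{\iota(X^0)}\) is an isomorphism of flows. In particular, each map \(\PP_{\alpha,\beta}X\to\PP_{\iota(\alpha),\iota(\beta)}Y\) is a homeomorphism, which is the claim.

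There is no real obstacle here; the only point needing care is the identification \(P=Q|_P\). This is precisely where the order-reflecting hypothesis enters. Without it, \(Q|_P\) would have extra execution paths between states of \(P\), and the square would no longer be of the form required by Proposition~\ref{prop:full-subflow-pushout}. One might also worry about strictness of the order, since flows use \(<\) rather than \(\leq\). But order-reflection for \(\leq\) together with the fact that \(P\) carries the restricted order gives equivalence of the strict relations as well, because antisymmetry makes \(x<y\) equivalent to \(x\leq y\) and \(x\neq y\) in both posets.
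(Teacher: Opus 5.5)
Your proposal is correct and takes the same approach as the paper, which simply applies Proposition~\ref{prop:full-subflow-pushout} with \(U=Q\) and \(S=P\). Your explicit check that order-reflection yields \(Q|_P=P\) as flows, including the remark about strict versus non-strict order, is exactly the identification the paper leaves implicit.
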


\begin{proof}
	Apply Proposition~\ref{prop:full-subflow-pushout} with \(U=Q\) and \(S=P\).
\end{proof}

\section{Pushout along a chain replacement}
\label{sec:pushout-along-chain-replacement}

\begin{definition} [{\cite[p. 686]{hocolimfacile}}] An injective continuous map \(i:A\rightarrow X\) of topological spaces is \textit{relative-\(T_1\)} if for any open subset \(U\) of \(A\) and any point \(z\in X\backslash i(U)\), there is an open set \(W\) of \(X\) with \(i(U)\subset W\) and \(z\notin W\). 
\end{definition}

Every closed-\(T_1\) inclusion is relative-\(T_1\) by \cite[Proposition~2.3]{leftproperflow}: relative-\(T_1\) maps should be understood as generalizations of closed-\(T_1\) inclusions. We recall the following important fact:

\begin{thm} \label{thm:path-almost-accessible}
Let \(X:\lambda \to \Flow\) be a transfinite tower of flows such that for all \(\mu<\lambda\), the map \(\PP X_\mu \to \PP X_{\mu+1}\) is a relative-\(T_1\) inclusion. Then the canonical map \[\injlim (\PP\cdot X) \longrightarrow \PP \injlim X\] is a homeomorphism. Moreover the topology of \(\PP \injlim X\) is the final topology.
\end{thm}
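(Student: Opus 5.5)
The plan is to reduce the statement to the corresponding fact for spaces, using the explicit description of colimits of flows. Put \(X_\lambda=\injlim X\) and write \(X^0_\mu\) for the state sets. Since the state functor \((-)^0\) commutes with colimits, \(X_\lambda^0=\injlim X^0_\mu\). The path-space functor \(\PP\) is defined on the whole set of pairs of states. The comparison map \(\injlim \PP X_\mu\to\PP X_\lambda\) is the colimit of the maps \(\PP X_\mu\to\PP X_\lambda\), and the main task is to show it is a continuous bijection which is open.

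\textbf{Step 1: a candidate colimit.} I would form the candidate colimit \(Z\) directly. Its states are \(\injlim X^0_\mu\). For each pair of states, its path space is \(\PP_{u,v}Z=\injlim_\mu \PP_{u_\mu,v_\mu}X_\mu\), indexed over those \(\mu\) for which \(u,v\) have representatives; more precisely, it is the fiber of the space \(\injlim\PP X_\mu\) over \((u,v)\). Composition is defined levelwise. Continuity of \(*\) on \(\PP Z\times_{Z^0}\PP Z\to \PP Z\) needs the colimit of a tower to commute with binary products. This is where the relative-\(T_1\) hypothesis enters. For a tower of relative-\(T_1\) inclusions, \(\injlim(A_\mu\times B_\mu)\cong \injlim A_\mu\times\injlim B_\mu\) in \(\Top\); this is the result of \cite{hocolimfacile} used in \cite{leftproperflow}. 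With this, composition is continuous and \(Z\) is a flow.

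\textbf{Step 2: \(Z\) is the colimit.} By construction, \(Z\) satisfies the universal property among flows. A cocone \(X_\mu\to W\) induces compatible maps on states and on path spaces, hence a continuous map on the colimit of path spaces, and it respects composition because it does so levelwise. Therefore \(Z\cong\injlim X\), and the comparison map is the identity of \(\injlim\PP X_\mu\), which is a homeomorphism.

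\textbf{Step 3: the final topology.} The final topology statement is then the definition of the colimit topology of a tower in \(\Top\): a subset is open if and only if its preimage in every \(\PP X_\mu\) is open. Injectivity of the transition maps makes this the final topology for the family of inclusions.

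\textbf{Main obstacle.} I expect the real difficulty to be Step 1, the commutation of the tower colimit with the fiber product \(\PP\times_{X^0}\PP\) in \(\Delta\)-generated spaces. I would first try the argument of \cite[Proposition~2.3 ff.]{leftproperflow}: the relative-\(T_1\) condition guarantees that compact subsets of the colimit factor through some stage. Because the spaces are \(\Delta\)-generated, and the product \(\injlim A_\mu\times\injlim B_\mu\) is \(\Delta\)-generated, every map \(|\Delta^n|\to\injlim A_\mu\times\injlim B_\mu\) then factors through some \(A_\mu\times B_\mu\). Hence the two topologies agree. The fiber product is then handled as a closed or open summand decomposition over pairs of states, since \(X^0\) is discrete.
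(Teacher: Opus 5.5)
The paper gives no argument for this statement; it simply quotes \cite[Theorem~5.5]{leftproperflow}. Your outline is the natural direct proof: build the candidate flow whose path space is the colimit of the path spaces, get continuity of composition from the fact that tower colimits commute with binary products (tested on maps from simplices that factor through one stage), then check the universal property. In the plain \(\Delta\)-generated setting it works, but two steps are missing.

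First, the hypothesis only concerns successor maps. For a limit ordinal \(\nu<\lambda\), the flow \(X_\nu\) is a colimit in \(\Flow\), and the claim that \(\PP X_\nu\) is the colimit of the \(\PP X_\mu\), \(\mu<\nu\), is the statement itself at \(\nu\). The factorization lemma for relative-\(T_1\) towers is about towers that are continuous at limit stages in \(\Top\). Its proof builds open sets stage by stage, taking unions at limit stages, and these unions must be open in \(\PP X_\nu\) before the relative-\(T_1\) property of \(\PP X_\nu\to\PP X_{\nu+1}\) can be used. So you need a transfinite induction on \(\lambda\).

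Second, what you need from that lemma is the map version: every continuous map \(|\Delta^n|\to\injlim A_\mu\) factors \emph{continuously} through some \(A_\mu\). Knowing only that compact subsets lie set-theoretically in a stage is not enough, because relative-\(T_1\) maps need not be embeddings. For example, the identity from \(\mathbb R\) with the discrete topology to \(\mathbb R\) is relative-\(T_1\).

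Step~3 is wrong in the \(\Delta\)-Hausdorff variant of \(\Top\), which the paper explicitly allows. There, colimits are obtained by applying the \(\Delta\)-Hausdorff reflection to the final topology on the set-theoretic colimit, and this reflection may identify points. So the final topology is not the colimit topology by definition; the \emph{Moreover} clause is exactly the assertion that no reflection is needed. For the same reason, your candidate \(Z\) in Step~1 is an object of \(\Flow\) only once you know that the set-theoretic colimit of the \(\PP X_\mu\), with the final topology, is \(\Delta\)-Hausdorff.

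Your factorization argument supplies this. The \(\Delta\)-Hausdorff condition can be tested on maps from simplices, for instance closedness of the diagonal in the \(\Delta\)-product. Each such test map factors continuously through a single \(\PP X_\mu\), which is \(\Delta\)-Hausdorff. With the induction and this verification added, your argument proves the statement.
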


\begin{proof}
	It is \cite[Theorem~5.5]{leftproperflow}.
\end{proof}

Theorem~\ref{thm:path-almost-accessible} applies in particular when each map \(\PP X_\mu \to \PP X_{\mu+1}\) is an h-cofibration since every h-cofibration of \(\Top\) is relative-\(T_1\) by \cite[Proposition~2.6]{leftproperflow}.

\begin{proposition} \label{prop:q-2-h}
	Let \(f:X\to Y\) be a trivial q-cofibration of flows. Then for every \(u,v\in X^0\), the induced map 
	\[
	\PP_{u,v} X \xlongrightarrow{f}\PP_{f(u),f(v)}Y
	\]
	is a trivial h-cofibration of spaces.
\end{proposition}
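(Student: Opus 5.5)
Every trivial q-cofibration of flows is a retract of a relative \(J^{\mathrm{gl}}\)-cell complex, and both retracts and trivial h-cofibrations of spaces are stable under retracts. So the plan is to reduce to three cases: a single pushout of a generating map \(\Glob(\Di^n\times\{0\})\subset\Glob(\Di^n\times[0,1])\), then transfinite compositions, then retracts. The retract step is formal. If \(f\) is a retract of \(g\) in the arrow category, then for each pair \((u,v)\) the path-space map of \(f\) is a retract of the path-space map of \(g\) at the image states. This works because the retraction maps on states are compatible.

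For a single pushout \(X\to Y\) along \(\Glob(\Di^n\times\{0\})\subset\Glob(\Di^n\times[0,1])\), the states do not change. I would use the word description of \(\PP Y\), as in the proof of Lemma~\ref{lem:full-subflow-pushout} and as worked out in \cite{leftproperflow}. Then \(\PP_{u,v}Y\) is built from \(\PP_{u,v}X\) by a tower \(\PP X=Z_0\to Z_1\to\cdots\), where \(Z_{k}\to Z_{k+1}\) attaches the words containing exactly \(k+1\) occurrences of the new cell. Concretely, \(Z_{k+1}\) is a pushout of \(Z_k\) along a map of the form
\[
\PP_{u,a}X^{\sqcup}\times \Di^n\times\{0\}\times\cdots\ \longrightarrow\ \PP_{u,a}X^{\sqcup}\times \Di^n\times[0,1]\times\cdots .
\]
This is a product of \(\Di^n\times\{0\}\subset\Di^n\times[0,1]\) (or of its iterated pushout-product) with spaces obtained by adjoining formal identities to \(\PP X\). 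The inclusion \(\Di^n\times\{0\}\subset\Di^n\times[0,1]\) is a trivial h-cofibration, since it is a strong deformation retract and an NDR pair. Pushout-products of trivial h-cofibrations with arbitrary h-cofibrations are trivial h-cofibrations in the h-model structure of \(\Top\), and so are products with arbitrary spaces, because the h-model structure is monoidal and all spaces are h-cofibrant. So each step \(Z_k\to Z_{k+1}\) is a trivial h-cofibration. Equivalently, and more simply, the whole map is a pushout of a strong deformation retract, and the deformation of \(\Di^n\times[0,1]\) onto \(\Di^n\times\{0\}\) extends letterwise to a deformation retraction of \(\PP_{u,v}Y\) onto \(\PP_{u,v}X\). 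I would present it this way: define the homotopy on words by applying the retraction homotopy to each new letter, and check compatibility with the composition relations and continuity via the final topology.

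For transfinite compositions, each successor map on path spaces is a trivial h-cofibration, hence relative-\(T_1\) by \cite[Proposition~2.6]{leftproperflow}. Theorem~\ref{thm:path-almost-accessible} then identifies \(\PP\injlim X\) with \(\injlim\PP X_\mu\). Path spaces between fixed states \(u,v\) form a summand of \(\PP\), compatible with the colimit. Trivial h-cofibrations are closed under transfinite composition, as the left class of a model structure, so the composite path-space map is a trivial h-cofibration.

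I expect the main obstacle to be the single-pushout step: checking rigorously that the explicit word construction really has the claimed filtration, or that the letterwise deformation is continuous. The continuity issue is a topology issue, since the pushout of flows is a quotient of a coproduct of products of spaces. I would first try the deformation-retraction argument, relying on the fact that products and quotients in \(\Top\) are well behaved because \(\Top\) is cartesian closed. I would also cite \cite{leftproperflow} for the path-space description of pushouts along \(\Glob\)-maps, which already shows that such pushouts send h-cofibrations to h-cofibrations.
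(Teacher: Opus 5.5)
Your proposal follows the paper's proof. You reduce by retracts to a relative \(J^{\mathrm{gl}}\)-cell complex, obtain a trivial h-cofibration on path spaces for each single pushout (the paper simply cites \cite[Theorem~5.4(2)]{leftproperflow}, which is the word/pushout-product filtration you sketch), and pass to transfinite composites using relative-\(T_1\)-ness and Theorem~\ref{thm:path-almost-accessible}. One caution: your ``more simply'' letterwise deformation only shows that \(\PP_{u,v}X\) is a strong deformation retract of \(\PP_{u,v}Y\), which does not give the homotopy extension property. (It is rigorous if built as a flow homotopy \([0,1]\boxtimes Y\to Y\), using that \([0,1]\boxtimes-\) preserves pushouts.) So the h-cofibration half must still come from the filtration argument or from \cite{leftproperflow}, as your last sentence indicates.
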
 

If \(X\) and \(Y\) were q-cofibrant, the result would follow from \cite[Theorem~5.7]{leftproperflow}. We prove below the same conclusion without assuming that \(X\) and \(Y\) are q-cofibrant.

\begin{proof}
	Since the q-model structure of \(\Flow\) is cofibrantly generated, the map \(f\) is a retract of a transfinite composition of pushouts of the generating trivial q-cofibrations. It suffices to prove the result first when \(f\) is a relative \(J^{\mathrm{gl}}\)-cell complex, where \(J^{\mathrm{gl}}\) is the set of generating trivial q-cofibrations of the q-model structure of flows. The general case then follows because every trivial q-cofibration is a retract of such a relative \(J^{\mathrm{gl}}\)-cell complex, and trivial h-cofibrations are closed under retracts. Assume that there is a transfinite tower of pushouts of generating trivial q-cofibrations of flows
\[
X=X_0 \longrightarrow X_1 \longrightarrow \dots \longrightarrow \injlim X_\lambda = Y.
\]
By \cite[Theorem~5.4(2)]{leftproperflow}, each pushout of a generating trivial q-cofibration of flows induces a trivial h-cofibration on the path spaces (i.e. a trivial cofibration of the h-model structure of \(\Top\)). Thus we obtain a transfinite tower of trivial h-cofibrations of spaces 
\[
\PP_{u,v}X=\PP_{u,v}X_0 \longrightarrow \PP_{u,v} X_1 \longrightarrow \dots \longrightarrow \injlim \PP_{u,v} X_\lambda\, .
\]
A transfinite composite of trivial h-cofibrations is again a trivial h-cofibration since they are trivial cofibrations of the h-model structure of \(\Top\). Since every h-cofibration of \(\Top\) is relative-\(T_1\)  by \cite[Proposition~2.6]{leftproperflow}, the homeomorphism supplied by Theorem~\ref{thm:path-almost-accessible} induces the map
\[
\PP_{u,v}X\longrightarrow \PP_{f(u),f(v)}Y
\]
which is the composite of a trivial h-cofibration and a homeomorphism.
\end{proof}

\begin{thm} \label{thm:canonical}
Let \(i:P\subset Q\) be an order-reflecting inclusion of posets, and let
\[
\Ch(i):\Ch(P)\longrightarrow \Ch(Q)
\]
be the chain replacement of the inclusion
\(P\subset Q\).  Let
\[
\begin{tikzcd}
\Ch(P) \ar[r] \ar[d,"\Ch(i)"',rightarrowtail] & X \ar[d,"f",rightarrowtail] \\
\Ch(Q) \ar[r] & Y \cocartesian
\end{tikzcd}
\]
be a pushout of flows.  Then for all states \(u,v\in X^0\), the induced map
\[
\PP_{u,v}X\longrightarrow \PP_{f(u),f(v)}Y
\]
is a trivial h-cofibration of topological spaces.
\end{thm}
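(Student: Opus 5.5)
The plan is to factor \(\Ch(i)\) through the full subflow \(\Ch(Q)|_P\):
\[
\Ch(P)\xrightarrow{\ j\ }\Ch(Q)|_P\xrightarrow{\ k\ }\Ch(Q),
\]
where \(j\) is the state-bijective map of Theorem~\ref{thm:order-reflecting-effect} and \(k\) is the inclusion of the full subflow on \(P\). The given pushout then splits into two pushout squares, pasted side by side. First form \(X'=X\amalg_{\Ch(P)}\Ch(Q)|_P\), the pushout of \(j\) along \(\Ch(P)\to X\). Then form \(Y=X'\amalg_{\Ch(Q)|_P}\Ch(Q)\), the pushout of \(k\) along the induced map \(\Ch(Q)|_P\to X'\). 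By the pasting lemma, the outer rectangle is the original pushout. Hence \(f\) factors as \(X\xrightarrow{g}X'\xrightarrow{h}Y\), and \(f(u)=h(g(u))\) on states.

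For the first square, Theorem~\ref{thm:order-reflecting-effect} says that \(j\) is a trivial q-cofibration, because \(P\subset Q\) is order-reflecting. Trivial q-cofibrations are stable under pushout, so \(g:X\to X'\) is a trivial q-cofibration of flows. Proposition~\ref{prop:q-2-h} then gives that \(\PP_{u,v}X\to\PP_{g(u),g(v)}X'\) is a trivial h-cofibration for all \(u,v\in X^0\). Note that Proposition~\ref{prop:q-2-h} needs no cofibrancy of \(X\); this is exactly why it was proved in that generality. Also, \(j\) is bijective on states, so \(g\) is bijective on states: the state set of the pushout is \(X^0\amalg_P P=X^0\).

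For the second square, apply Proposition~\ref{prop:full-subflow-pushout} with \(U=\Ch(Q)\), \(S=P\), and \(g\) replaced by the map \(\Ch(Q)|_P\to X'\). It shows that \(X'\to Y|_{h(X'^0)}\) is an isomorphism. So for all states \(a,b\in X'^0\), the map \(\PP_{a,b}X'\to\PP_{h(a),h(b)}Y\) is a homeomorphism.

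Composing the two steps, \(\PP_{u,v}X\to\PP_{f(u),f(v)}Y\) is a trivial h-cofibration followed by a homeomorphism, hence a trivial h-cofibration. The only point requiring care is checking that the pasted rectangle really is the given pushout and that the state identifications match. This is routine: since \(g\) is bijective on states, \(h(X'^0)=f(X^0)\). I expect no serious obstacle, since the heavy lifting is already done in Theorem~\ref{thm:order-reflecting-effect} and Proposition~\ref{prop:q-2-h}.
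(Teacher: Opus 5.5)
Your proposal is correct and matches the paper's proof step by step: factor through \(\Ch(Q)|_P\), use Theorem~\ref{thm:order-reflecting-effect} and Proposition~\ref{prop:q-2-h} to get a trivial h-cofibration on path spaces for the intermediate pushout, then apply Proposition~\ref{prop:full-subflow-pushout} to get a homeomorphism for the second pushout. Your observation that the first pushout is bijective on states, so that \(h(X'^0)=f(X^0)\), makes explicit the state bookkeeping the paper leaves implicit.
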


\begin{proof}
Factor \(\Ch(i)\) as
\[
\Ch(P)\longrightarrow \Ch(Q)|_P
\longrightarrow \Ch(Q).
\] 
Form the intermediate pushout
\[
X_1:=X\amalg_{\Ch(P)}\Ch(Q)|_P.
\]
By Theorem~\ref{thm:order-reflecting-effect}, the induced map 
\[
X\longrightarrow X_1
\]
is a trivial q-cofibration of flows. By Proposition~\ref{prop:q-2-h}, for all \(u,v\in X^0\), the map
\[
\PP_{u,v}X\longrightarrow \PP_{u,v}X_1
\]
is a trivial h-cofibration. It remains to compare \(X_1\) with
\[
Y=X\amalg_{\Ch(P)}\Ch(Q)
   \cong X_1\amalg_{\Ch(Q)|_P}\Ch(Q).
\]
Apply Proposition \ref{prop:full-subflow-pushout} with
\[
U=\Ch(Q),
\qquad
S=P,
\qquad
U|_S=\Ch(Q)|_P,
\qquad
X=X_1.
\]
It gives the isomorphism of flows
\[
X_1\cong Y|_{f(X^0)}.
\]
Therefore, for all states \(u,v\in X^0=X_1^0\), the canonical map
\[
\PP_{u,v}X_1\longrightarrow \PP_{f(u),f(v)}Y
\]
is a homeomorphism. Thus
\[
\PP_{u,v}X\longrightarrow \PP_{f(u),f(v)}Y
\]
is the composite of the trivial h-cofibration \(\PP_{u,v}X\to\PP_{u,v}X_1\) and a homeomorphism.  It is therefore a trivial h-cofibration.
\end{proof}

\section{The H-model structure of flows}
\label{sec:H-model-structure}

\begin{lem}[well-known]\label{lem:components}
	Every object \(U\) of \(\Top\) is canonically homeomorphic, in \(\Top\), to the coproduct of its
	path-connected components:
	\[
	\coprod_{c\in\pi_0(U)} U_c \xrightarrow{\ \cong\ } U.
	\]
\end{lem}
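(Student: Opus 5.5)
The plan is to show that the canonical map \(\phi:\coprod_{c\in\pi_0(U)} U_c\to U\), induced by the inclusions \(U_c\subset U\) (each \(U_c\) carrying the topology of \(\Top\), i.e. the \(\Delta\)-ification of the subspace topology), is a continuous bijection. We will then check that it is open. Continuity is automatic from the universal property of the coproduct. Bijectivity holds because the path-connected components partition \(U\).

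For openness, we use that \(\Top\) consists of \(\Delta\)-generated spaces. The topology of \(U\) is therefore final with respect to all continuous maps \(|\Delta^n|\to U\), and it suffices to consider paths \([0,1]\to U\). The key step is to prove that each component \(U_c\) is open and closed in \(U\). Take \(W\subset U\) with \(W\cap U_c\) open for the relevant topology in every \(c\); we must show \(W\) is open in \(U\). Let \(\gamma:|\Delta^n|\to U\) be continuous. Since \(|\Delta^n|\) is path-connected, the image of \(\gamma\) lies in a single component \(U_c\). Hence \(\gamma^{-1}(W)=\gamma^{-1}(W\cap U_c)\).

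It remains to see that \(\gamma\), viewed as a map into \(U_c\), is continuous for the \(\Top\)-topology on \(U_c\). This is true because the \(\Delta\)-ification of the subspace topology has as continuous simplices exactly those continuous for the subspace topology. So \(\gamma^{-1}(W\cap U_c)\) is open, and by finality \(W\) is open in \(U\). This shows that the coproduct topology coincides with the topology of \(U\), i.e. \(\phi\) is a homeomorphism.

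In the \(\Delta\)-Hausdorff variant, the same argument applies, since coproducts and the final topology are computed identically there and components of \(\Delta\)-Hausdorff spaces are again \(\Delta\)-Hausdorff.

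The only delicate point is the identification of the topology on each \(U_c\) (subspace versus \(\Delta\)-ified subspace) and the use of finality. Since the statement is standard, I would cite the literature (e.g. the fact that \(\Delta\)-generated spaces are locally path-connected in the sense that path components are open) if a full verification is not desired.
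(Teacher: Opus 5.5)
Your proof is correct, but it is not what the paper does: the paper gives no argument and simply refers to Proposition~2.8 of one of its references. The standard justification behind such a citation is that $\Delta$-generated spaces are locally path-connected. Hence path components are open, hence clopen, and a space is the topological sum of any partition into clopen subsets.

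You instead verify the homeomorphism directly from the defining finality of the $\Delta$-generated topology. Take $W\subset U$ with every $W\cap U_c$ open in the $\Delta$-ified component. Each simplex $\gamma:|\Delta^n|\to U$ lands in a single $U_c$ because $|\Delta^n|$ is path-connected, and it stays continuous into the $\Delta$-ification of $U_c$. So $\gamma^{-1}(W)=\gamma^{-1}(W\cap U_c)$ is open, and therefore $W$ is open in $U$.

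Your route is self-contained and uses nothing beyond the definition of \(\Top\). It also makes explicit which topology the summands $U_c$ carry, a point the statement leaves implicit. The citation buys brevity, at the price of relying on the nontrivial local path-connectedness result.

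Two cosmetic points. Your announced key step, that each $U_c$ is clopen, is just the special case $W=U_c$ of the general argument you actually give. The remark that it suffices to test against paths $[0,1]\to U$ is superfluous, since you test against all simplices anyway; it would also need the separate fact that $\Delta$-generated spaces are generated by $[0,1]$.
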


\begin{proof}
	See \cite[Proposition~2.8]{mdtop}.
\end{proof}

The following consequence will be used repeatedly.  For every \(A\in\Top\), Lemma~\ref{lem:components} gives a natural homeomorphism
\begin{equation}\label{eq:mapsfromcomponentcoproduct}
	\TOP(U,A)
	\cong
	\prod_{c\in\pi_0(U)} \TOP(U_c,A).
\end{equation}
Indeed, this is just the internal-hom form of the universal property of the coproduct \(U\cong\coprod_{c}U_c\).

We denote by \(\FLOW(X,Y)\) the usual enriched hom-space of morphisms of flows from \(X\) to \(Y\). Thus the category of flows is regarded with its standard \(\Top\)-enrichment. By \cite[Theorem~5.10~(1)]{model3}, the comparison map
\[
\FLOW(\injlim X_i,Y) \xlongrightarrow{\cong} \projlim \FLOW(X_i,Y)
\]
is a homeomorphism. In the \(\Delta\)-generated setting, and unlike the \(k\)-space setting, the functor \(\widetilde{\omega}\) of \cite[Theorem~5.4]{model3} does commute with all limits, and not only with finite limits, because a limit of discrete spaces remains discrete in \(\Top\). Thus the same calculation as in the proof of  \cite[Theorem~5.10~(2)]{model3} proves that the comparison map 
\[
\FLOW(X,\projlim Y_i) \xlongrightarrow{\cong} \projlim \FLOW(X,Y_i)
\]
is a homeomorphism for any limit.

For \(U\in\Top\) and a flow \(Y\), recall the flow \(\{U,Y\}_S\):
\[
\{U,Y\}_\Sp^0=Y^0,
\qquad
\PP_{\alpha,\beta}\{U,Y\}_S=\TOP(U,\PP_{\alpha,\beta}Y),
\]
with pointwise composition.  Theorem~7.8 of \cite{model3} says that the functor \(\{U,-\}_S:\Flow\to\Flow\) has a left adjoint, denoted by
\[
U\boxtimes - :\Flow\longrightarrow\Flow.
\]
In particular, there are natural bijections
\[
\Flow(U\boxtimes X,Y)\cong \Flow(X,\{U,Y\}_S).
\]
When \(C\) is non-empty and path-connected, there are natural bijections of sets 
\[
\Top(C,\FLOW(X,Y))
\cong
\Flow(C\boxtimes X,Y)
\cong
\Flow(X,\{C,Y\}_S)
\]
by \cite[Theorem~7.9]{model3}. We shall use the standard strengthened form for a non-empty path-connected parameter space
\(C\):
\begin{equation}\label{eq:connectedtensorcotensor}
	\TOP(C,\FLOW(X,Y))
	\cong
	\FLOW(C\boxtimes X,Y)
	\cong
	\FLOW(X,\{C,Y\}_S).
\end{equation}
The left-hand homeomorphism is precisely \cite[Theorem~9.6]{model3}. The right-hand homeomorphism states that the adjunction is compatible with the \(\Top\)-enrichment. This statement does not appear explicitly in \cite{model3}. It is derived from the Yoneda lemma as follows. Let \(Z\in \Top\). First we observe that 
\[
\begin{aligned}
	&\Top(Z,\FLOW(C\boxtimes X,Y))\cong  \prod_{c\in\pi_0(Z)} \Top(Z_c,\FLOW(C\boxtimes X,Y)), \\[2mm]
    &\Top(Z,\FLOW(X,\{C,Y\}_S)) \cong \prod_{c\in\pi_0(Z)}\Top(Z_c,\FLOW(X,\{C,Y\}_S)).
\end{aligned}
\]
Thus we can assume that \(Z\) is non-empty and path-connected. Then, using \cite[Theorems 7.8 and 7.9]{model3}, we write the sequence of natural bijections 
\[
\begin{aligned}
	\Top(Z,\FLOW(C\boxtimes X,Y)) &\cong \Flow(C\boxtimes (Z\boxtimes X),Y)\\
	&\cong \Flow((C\times Z)\boxtimes X,Y)\\
	&\cong \Flow(Z\boxtimes X,\{C,Y\}_S)\\
	&\cong \Top(Z,\FLOW(X,\{C,Y\}_S)).
\end{aligned}
\]
By the Yoneda lemma, the natural map 
\[
\FLOW(C\boxtimes X,Y) \cong \FLOW(X,\{C,Y\}_S)
\]
is therefore a homeomorphism. We shall also use the globe formula from \cite[Theorem~7.8]{model3}: for \(C\) path-connected and non-empty and for every \(Z\in\Top\),
\begin{equation}\label{eq:globeformula}
	C\boxtimes \Glob(Z)\cong \Glob(C\times Z).
\end{equation}

Let \(U\in\Top\) and let \(X\) be a flow.  Define
\begin{equation}\label{eq:tensor-definition}
	U\otimes X
	:=
	\coprod_{c\in\pi_0(U)} \bigl(U_c\boxtimes X\bigr),
\end{equation}
where the coproduct is taken in \(\Flow\).  If \(U=\varnothing\), this is the initial flow. Observe that
\[
(U\otimes X)^0
\cong
\coprod_{c\in\pi_0(U)} X^0,
\]
as expected: a family of maps \(U\to\FLOW(X,Y)\) may choose a different state map \(X^0\to Y^0\) on each path-connected component of \(U\).

\begin{proposition}\label{prop:tensor}
	For any flow \(X,Y\) and any space \(U\), there is a natural homeomorphism
	\[
	\FLOW(U\otimes X,Y)
	\cong
	\TOP(U,\FLOW(X,Y)).
	\]
\end{proposition}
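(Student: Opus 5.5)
The plan is to compute both sides as products indexed by \(\pi_0(U)\) and compare them factor by factor using the connected case \eqref{eq:connectedtensorcotensor}. I will first treat \(U=\varnothing\) separately, then reduce to non-empty path-connected components, and finally verify naturality.

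\emph{The empty case.} If \(U=\varnothing\), then \(U\otimes X\) is the initial flow. Hence \(\FLOW(U\otimes X,Y)\) is a point, and so is \(\TOP(\varnothing,\FLOW(X,Y))\). From now on I assume \(U\) is non-empty.

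\emph{Left-hand side.} By definition \eqref{eq:tensor-definition}, \(U\otimes X\) is the coproduct in \(\Flow\) of the flows \(U_c\boxtimes X\). I first need the enriched hom to send coproducts of flows to products of spaces. The homeomorphism \(\FLOW(\injlim X_i,Y)\cong\projlim\FLOW(X_i,Y)\) recalled from \cite[Theorem~5.10~(1)]{model3} holds for any colimit, so in particular for coproducts. This gives
\[
\FLOW(U\otimes X,Y)\cong\prod_{c\in\pi_0(U)}\FLOW(U_c\boxtimes X,Y).
\]

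\emph{Right-hand side.} Each component \(U_c\) is non-empty and path-connected. The left-hand homeomorphism of \eqref{eq:connectedtensorcotensor}, namely \cite[Theorem~9.6]{model3}, therefore identifies each factor:
\[
\FLOW(U_c\boxtimes X,Y)\cong\TOP(U_c,\FLOW(X,Y)).
\]
Then \eqref{eq:mapsfromcomponentcoproduct}, applied with \(A=\FLOW(X,Y)\), identifies the product \(\prod_c\TOP(U_c,\FLOW(X,Y))\) with \(\TOP(U,\FLOW(X,Y))\). Composing the three homeomorphisms gives the statement.

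\emph{Naturality.} In \(X\) and \(Y\), each step is natural: the colimit comparison map, the enriched adjunction of \cite[Theorem~9.6]{model3}, and \eqref{eq:mapsfromcomponentcoproduct}. In \(U\), a map \(U\to U'\) sends each component \(U_c\) into a single component \(U'_{c'}\). The tensor construction is functorial through the maps \(U_c\boxtimes X\to U'_{c'}\boxtimes X\) followed by the coproduct inclusion. Under the identifications above, this corresponds to restriction along \(U_c\to U'_{c'}\), which is compatible with the component splitting of Lemma~\ref{lem:components}.

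The main obstacle I expect is this naturality in \(U\). The connected homeomorphism must be natural with respect to maps between path-connected spaces, and I will justify this by the Yoneda argument already used in the text to establish \eqref{eq:connectedtensorcotensor}.
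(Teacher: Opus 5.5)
Your proposal is correct and is essentially the paper's proof. It uses the same chain of homeomorphisms \(\FLOW(U\otimes X,Y)\cong\prod_{c}\FLOW(U_c\boxtimes X,Y)\cong\prod_{c}\TOP(U_c,\FLOW(X,Y))\cong\TOP(U,\FLOW(X,Y))\), obtained from the colimit/coproduct property of the enriched hom, the connected case \eqref{eq:connectedtensorcotensor}, and \eqref{eq:mapsfromcomponentcoproduct}. Your separate treatment of \(U=\varnothing\) is already covered by the empty-product convention, and your naturality remarks in \(U\) spell out what the paper asserts in one line.
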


\begin{proof}
	Using the definition \eqref{eq:tensor-definition}, the mapping-space universal property of coproducts in \(\Flow\), the connected case \eqref{eq:connectedtensorcotensor}, and finally \eqref{eq:mapsfromcomponentcoproduct}, we obtain natural homeomorphisms
	\[
	\begin{aligned}
		\FLOW(U\otimes X,Y)
		&\cong
		\prod_{c\in\pi_0(U)} \FLOW(U_c\boxtimes X,Y) \\
		&\cong
		\prod_{c\in\pi_0(U)} \TOP(U_c,\FLOW(X,Y)) \\
		&\cong
		\TOP\!\left(\coprod_{c\in\pi_0(U)}U_c,\FLOW(X,Y)\right) \\
		&\cong
		\TOP(U,\FLOW(X,Y)).
	\end{aligned}
	\]
	All homeomorphisms are natural in \(U\), \(X\) and \(Y\).
\end{proof}

Let \(U\in\Top\) and let \(Y\) be a flow.  Define
\begin{equation}\label{eq:cotensor-definition}
	Y^U
	:=
	\prod_{c\in\pi_0(U)} \{U_c,Y\}_S,
\end{equation}
where the product is taken in \(\Flow\).  If \(U=\varnothing\), this is the terminal flow.

\begin{proposition}\label{prop:cotensor}
	For any flow \(X,Y\) and any space \(U\), there is a natural homeomorphism
	\[
	\FLOW(X,Y^U)
	\cong
	\TOP(U,\FLOW(X,Y)).
	\]
\end{proposition}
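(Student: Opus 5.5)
The plan is to mirror the proof of Proposition~\ref{prop:tensor}, using the definition \eqref{eq:cotensor-definition} of \(Y^U\) as a product of the flows \(\{U_c,Y\}_S\). Three ingredients carry the argument, applied in order. First, the mapping-space universal property of products in \(\Flow\): the comparison map
\[
\FLOW(X,\projlim Y_i)\longrightarrow \projlim \FLOW(X,Y_i)
\]
is a homeomorphism for any limit, as established above via the extension of \cite[Theorem~5.10~(2)]{model3} to the \(\Delta\)-generated setting. Second, the connected case \eqref{eq:connectedtensorcotensor}. Third, the identity \eqref{eq:mapsfromcomponentcoproduct}.

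Concretely, I would write the chain of natural homeomorphisms
\[
\begin{aligned}
	\FLOW(X,Y^U)
	&\cong \prod_{c\in\pi_0(U)} \FLOW(X,\{U_c,Y\}_S) \\
	&\cong \prod_{c\in\pi_0(U)} \FLOW(U_c\boxtimes X,Y) \\
	&\cong \prod_{c\in\pi_0(U)} \TOP(U_c,\FLOW(X,Y)) \\
	&\cong \TOP\!\left(\coprod_{c\in\pi_0(U)}U_c,\FLOW(X,Y)\right)
	\cong \TOP(U,\FLOW(X,Y)).
\end{aligned}
\]
The justifications are as follows. The first step is the limit-preservation statement applied to the product. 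The second and third steps are the two halves of \eqref{eq:connectedtensorcotensor}, applied to each path-connected component \(U_c\), which is non-empty and path-connected. The last two steps are \eqref{eq:mapsfromcomponentcoproduct} together with Lemma~\ref{lem:components}.

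The case \(U=\varnothing\) needs a separate check. Then \(Y^U\) is the terminal flow, so \(\FLOW(X,Y^U)\) is a point, since there is a unique morphism into the terminal flow. On the other side, \(\TOP(\varnothing,-)\) is also a point. Both sides are therefore points and the claim holds. This case is anyway consistent with the empty product in the displayed chain.

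Alternatively, one could bypass the middle steps by combining with Proposition~\ref{prop:tensor}: the composite \(\FLOW(X,Y^U)\cong\prod_c\FLOW(U_c\boxtimes X,Y)\cong\FLOW(U\otimes X,Y)\) reduces the claim to that proposition. The main obstacle I expect is naturality, especially in \(U\). A map \(U\to U'\) sends each component \(U_c\) into a single component \(U'_{c'}\), so the maps \(Y^{U'}\to Y^U\) have to be assembled from the restrictions \(\{U'_{c'},Y\}_S\to\{U_c,Y\}_S\). I would check that the homeomorphisms above are compatible with these restrictions, using the naturality of \eqref{eq:connectedtensorcotensor} in the parameter space \(C\), which follows from the Yoneda argument given earlier. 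Naturality in \(X\) and \(Y\) is then routine.
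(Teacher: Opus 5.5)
Your proposal is correct and follows essentially the paper's proof. It uses the same chain of homeomorphisms: the mapping-space universal property of products in \(\Flow\), then the connected case \eqref{eq:connectedtensorcotensor} on each component \(U_c\), then \eqref{eq:mapsfromcomponentcoproduct}. The only differences are that you split \eqref{eq:connectedtensorcotensor} into its two halves via \(\FLOW(U_c\boxtimes X,Y)\), and that you explicitly check \(U=\varnothing\) and naturality in \(U\), which the paper leaves implicit.
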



\begin{proof}
	Using the definition \eqref{eq:cotensor-definition}, the mapping-space universal property of products in \(\Flow\), the connected case \eqref{eq:connectedtensorcotensor}, and \eqref{eq:mapsfromcomponentcoproduct}, we obtain natural homeomorphisms
	\[
	\begin{aligned}
		\FLOW(X,Y^U)
		&\cong
		\prod_{c\in\pi_0(U)} \FLOW(X,\{U_c,Y\}_S) \\
		&\cong
		\prod_{c\in\pi_0(U)} \TOP(U_c,\FLOW(X,Y)) \\
		&\cong
		\TOP\!\left(\coprod_{c\in\pi_0(U)}U_c,\FLOW(X,Y)\right) \\
		&\cong
		\TOP(U,\FLOW(X,Y)).
	\end{aligned}
	\]
	Again, the construction is natural in \(U\), \(X\) and \(Y\).
\end{proof}

Combining Propositions \ref{prop:tensor} and \ref{prop:cotensor}, the standard \(\Top\)-enriched category of flows is tensored and cotensored over \(\Top\) in the sense of \cite[Section~6.5]{Borceux2}.  Explicitly, for \(U\in\Top\) and flows \(X,Y\), the tensor and cotensor are
\[
U\otimes X=\coprod_{c\in\pi_0(U)}(U_c\boxtimes X),
\qquad
Y^U=\prod_{c\in\pi_0(U)}\{U_c,Y\}_S,
\]
and they satisfy the enriched adjunction homeomorphisms
\[
\FLOW(U\otimes X,Y)
\cong
\TOP(U,\FLOW(X,Y))
\cong
\FLOW(X,Y^U).
\]
When \(U\) is path-connected and non-empty, the formulas reduce to the classical \(U\boxtimes X\) and \(\{U,Y\}_S\).  

Since \(\Flow\) is complete and cocomplete, it is therefore topologically bicomplete in the sense of Barthel--Riehl \cite{Barthel-Riel}. Since \(\Flow\) is locally presentable by \cite[Theorem~6.11]{Moore1} and topologically bicomplete, it satisfies the monomorphism hypothesis of \cite[Definition~5.16]{Barthel-Riel} by \cite[Remark~5.20]{Barthel-Riel}.

We now write H with a capital letter for the Hurewicz model structure on \(\Flow\) obtained from \cite[Corollary~5.23]{Barthel-Riel}.  Thus the weak equivalences are the homotopy equivalences defined by the topological enrichment, the fibrations are the maps that have the right lifting property against all cylinder inclusions of the form \(i_0: Z \to I\otimes Z \), and the cofibrations are the maps that have the left lifting property against all H-fibrations that are also homotopy equivalences. The H-cofibrations have the left lifting property against all maps of the form \(p_0: Z^{I} \to Z\). This terminology avoids any conflict with the h-model structure of flows of \cite{QHMmodel}: in both model structures, all objects are fibrant; however, there exist non-h-cofibrant flows by \cite[Proposition~7.9]{QHMmodel}, whereas all flows are H-cofibrant.

\section{Comparison between the q- and the H-model structures}
\label{sec:comparison-q-H}

The following elementary observation is the key point for the comparison with the q-model structure.

\begin{lem}\label{lem:homotopic-flow-maps}
	Let \(f,g:X\to Y\) be two maps of flows.  If \(f\) and \(g\) are homotopic in the H-model structure, then \(f^0=g^0:X^0\to Y^0\), and the maps
	\[
	\PP f,\PP g:\PP X\longrightarrow \PP Y
	\]
	are homotopic maps of spaces.  More precisely, for every \(\alpha,\beta\in X^0\), the restrictions
	\[
	\PP_{\alpha,\beta}f,\PP_{\alpha,\beta}g:
	\PP_{\alpha,\beta}X\longrightarrow \PP_{f(\alpha),f(\beta)}Y
	\]
	are homotopic.
\end{lem}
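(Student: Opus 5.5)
In the H-model structure all flows are fibrant and cofibrant, so homotopy between \(f\) and \(g\) can be realized by a cylinder (left) homotopy. The plan is to use the cylinder \(I\otimes X\) with \(I=[0,1]\), or equivalently, by Proposition~\ref{prop:tensor}, a continuous map \(H:[0,1]\to\FLOW(X,Y)\) with \(H(0)=f\) and \(H(1)=g\). Since \(I\) is non-empty and path-connected, \(I\otimes X=I\boxtimes X\). Definition~\eqref{eq:tensor-definition} then gives \((I\otimes X)^0\cong X^0\), with a single copy of the states because \(\pi_0(I)\) is a point. If another cylinder object is used, I first reduce to this standard one: every homotopy between maps with cofibrant source can be replaced by one through a fixed good cylinder, since all objects are H-bifibrant and \(I\otimes X\) is a cylinder in the Barthel--Riehl structure.

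\emph{States.} The map \(\Theta:I\boxtimes X\to Y\) corresponding to \(H\) has state map \((I\boxtimes X)^0=X^0\to Y^0\), and both \(i_0\) and \(i_1\) induce the identity \(X^0\to (I\boxtimes X)^0\). Hence \(f^0=\Theta^0\circ i_0^0=\Theta^0=\Theta^0\circ i_1^0=g^0\). Equivalently, use the adjoint form: a point of \(\TOP(I,\FLOW(X,Y))\) is by \eqref{eq:connectedtensorcotensor} a flow map \(X\to\{I,Y\}_S\). This is the cleaner route, because \(\{I,Y\}_S\) has state set \(Y^0\) and path spaces \(\TOP(I,\PP_{\alpha',\beta'}Y)\). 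So \(H\) corresponds to a single flow map \(\widetilde H:X\to\{I,Y\}_S\) whose state map \(h:X^0\to Y^0\) is independent of \(t\). Evaluating at \(t=0\) and \(t=1\) gives \(f^0=h=g^0\).

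\emph{Paths.} For \(\alpha,\beta\in X^0\), \(\widetilde H\) gives a continuous map \(\PP_{\alpha,\beta}X\to\TOP(I,\PP_{h(\alpha),h(\beta)}Y)\). By cartesian closedness of \(\Top\), this is a continuous map \(\PP_{\alpha,\beta}X\times I\to\PP_{h(\alpha),h(\beta)}Y\). Its restrictions at \(t=0,1\) are \(\PP_{\alpha,\beta}f\) and \(\PP_{\alpha,\beta}g\), because evaluation \(\{I,Y\}_S\to Y\) at \(t\) is a flow map compatible with the adjunction. Summing over \(\alpha,\beta\) gives the homotopy \(\PP f\simeq\PP g\).

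\emph{Main obstacle.} The delicate step is the reduction of an arbitrary H-homotopy to one given by a path in \(\FLOW(X,Y)\). In Barthel--Riehl's structure, homotopy of maps is defined through the enrichment, namely maps \(I\otimes X\to Y\), and homotopy relations coincide for bifibrant objects, so this should be immediate. I would also check carefully that the identification \eqref{eq:connectedtensorcotensor} is compatible with evaluation at endpoints. This follows from naturality in \(C\), applied to the inclusions \(\{0\},\{1\}\subset I\).
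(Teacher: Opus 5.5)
Your proof is correct and follows the paper's argument: the cylinder $I\otimes X=I\boxtimes X$ has state set $X^0$, and both endpoint inclusions are the identity on states, which forces $f^0=g^0$; the path-space homotopy then comes from the adjunction. The only difference is minor: the paper composes $\widehat H\times\operatorname{Id}$ with the evaluation $\FLOW(X,Y)\times\PP X\to\PP Y$, whereas you pass to the adjoint flow map $X\to\{I,Y\}_S$ and uncurry, which avoids relying on continuity of evaluation (your preliminary reduction from an arbitrary cylinder to $I\otimes X$ is harmless, since the paper simply takes enriched homotopies as the definition).
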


\begin{proof}
	A homotopy from \(f\) to \(g\) is a map
	\[
	H:I\otimes X\longrightarrow Y,
	\qquad I=[0,1],
	\]
	whose restrictions along the two endpoint maps \(X\cong \{0\}\otimes X\to I\otimes X\) and \(X\cong \{1\}\otimes X\to I\otimes X\) are \(f\) and \(g\).  Since \(I\) is path-connected, our formula for the tensor gives
	\[
	I\otimes X=I\boxtimes X
	\qquad\hbox{and}\qquad
	(I\otimes X)^0=X^0.
	\]
	The two endpoint maps are the identity on the set of states.  Hence the two restrictions of \(H\) have the same state map, i.e. \(f^0=g^0\). By the tensor adjunction, \(H\) is equivalently a continuous map
	\[
	\widehat H:I\longrightarrow \FLOW(X,Y)
	\]
	from \(f\) to \(g\).  The composite
	\[
	I\times \PP X\xrightarrow{\ \widehat H\times \operatorname{Id}\ }
	\FLOW(X,Y)\times \PP X\xrightarrow{\operatorname{ev}} \PP Y
	\]
	is a homotopy from \(\PP f\) to \(\PP g\). Because \(f^0=g^0\), this homotopy preserves source and target and therefore restricts to a homotopy on every space \(\PP_{\alpha,\beta}X\).
\end{proof}

\begin{proposition}\label{prop:Hweak-equivalence-state-path}
	Let \(f:X\to Y\) be a weak equivalence of the H-model structure of flows.  Then \(f^0:X^0\to Y^0\) is a bijection, and
	\[
	\PP f:\PP X\longrightarrow \PP Y
	\]
	is a homotopy equivalence of spaces.  In fact, for every \(\alpha,\beta\in X^0\), the induced map
	\[
	\PP_{\alpha,\beta}X\longrightarrow \PP_{f(\alpha),f(\beta)}Y
	\]
	is a homotopy equivalence.
\end{proposition}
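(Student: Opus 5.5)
The plan is to unpack what a weak equivalence of the H-model structure is: by construction (Barthel--Riehl), the H-weak equivalences are the homotopy equivalences for the topological enrichment. Concretely, \(f\) admits a map \(g:Y\to X\) together with homotopies \(H:I\otimes X\to X\) from \(gf\) to \(\mathrm{Id}_X\) and \(K:I\otimes Y\to Y\) from \(fg\) to \(\mathrm{Id}_Y\). Since every flow is both H-fibrant and H-cofibrant, no cofibrant or fibrant replacement is needed. The whole argument then reduces to applying Lemma~\ref{lem:homotopic-flow-maps} to these two homotopies.

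For the states, Lemma~\ref{lem:homotopic-flow-maps} applied to \(gf\simeq \mathrm{Id}_X\) gives \(g^0f^0=\mathrm{Id}_{X^0}\). Applied to \(fg\simeq\mathrm{Id}_Y\), it gives \(f^0g^0=\mathrm{Id}_{Y^0}\). Hence \(f^0\) is a bijection with inverse \(g^0\).

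For the path spaces, fix \(\alpha,\beta\in X^0\) and write \(\alpha'=f(\alpha)\), \(\beta'=f(\beta)\). Since \(g^0\) inverts \(f^0\), we have \(g(\alpha')=\alpha\) and \(g(\beta')=\beta\). So \(g\) restricts to a map \(\PP_{\alpha',\beta'}Y\to\PP_{\alpha,\beta}X\). The refined form of Lemma~\ref{lem:homotopic-flow-maps} gives homotopies between the following restricted maps:
\begin{itemize}
\item \(\PP_{\alpha,\beta}(gf)\) and \(\mathrm{Id}\) on \(\PP_{\alpha,\beta}X\);
\item \(\PP_{\alpha',\beta'}(fg)\) and \(\mathrm{Id}\) on \(\PP_{\alpha',\beta'}Y\).
\end{itemize}
It follows that \(\PP_{\alpha,\beta}f\) is a homotopy equivalence with homotopy inverse \(\PP_{\alpha',\beta'}g\). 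Because \(f^0\) is bijective, \(\PP X\) and \(\PP Y\) are coproducts (by Lemma~\ref{lem:components}, or directly from the definition of \(\PP\)) of these pieces indexed compatibly by \(X^0\times X^0\cong Y^0\times Y^0\). Taking coproducts of the homotopy equivalences therefore shows that \(\PP f\) is a homotopy equivalence.

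The only step that needs real care is the first one: identifying H-weak equivalences with enriched homotopy equivalences whose homotopies are exactly maps \(I\otimes X\to Y\). If the model structure of \cite[Corollary~5.23]{Barthel-Riel} defines its weak equivalences through \(\TOP(-,-)\)-level homotopy equivalences of the mapping spaces, I would use the tensor adjunction of Proposition~\ref{prop:tensor} to translate a path in \(\FLOW(X,X)\) into a map \(I\otimes X\to X\). Any such zigzag of paths concatenates into a single homotopy, so in either formulation we land in the hypotheses of Lemma~\ref{lem:homotopic-flow-maps}.
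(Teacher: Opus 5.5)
Your proposal is correct and follows essentially the same route as the paper: you take a homotopy inverse \(g\) for the topological enrichment and apply Lemma~\ref{lem:homotopic-flow-maps} to \(gf\simeq \operatorname{Id}_X\) and \(fg\simeq\operatorname{Id}_Y\), which gives \(g^0=(f^0)^{-1}\) and homotopies on each \(\PP_{\alpha,\beta}\). The only cosmetic difference is the order: you prove the summand-wise statement first and then assemble \(\PP f\) as a coproduct, whereas the paper first obtains the homotopy equivalence on all of \(\PP X\) and then restricts it to each summand.
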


In the language of \cite{QHMmodel}, every weak equivalence of the H-model structure of flows is a weak equivalence of the h-model structure. The converse is false, since a weak equivalence of the h-model structure is not necessarily invertible up to homotopy.

\begin{proof}
	By definition of the weak equivalences in the H-model structure, \(f\) is a homotopy equivalence in the topologically enriched category \(\Flow\).  Let 
	\(g:Y\to X\) be a homotopy inverse and homotopies
	\[
	gf\simeq \operatorname{Id}_X,
	\qquad
	fg\simeq \operatorname{Id}_Y.
	\]
	Applying Lemma \ref{lem:homotopic-flow-maps} to these homotopies gives
	\[
	g^0f^0=\operatorname{Id}_{X^0},
	\qquad
	f^0g^0=\operatorname{Id}_{Y^0},
	\]
	so \(f^0\) is a bijection.  The same lemma gives homotopies of spaces
	\[
	\PP g\,\PP f\simeq \operatorname{Id}_{\PP X},
	\qquad
	\PP f\,\PP g\simeq \operatorname{Id}_{\PP Y}.
	\]
	Thus \(\PP f\) is a homotopy equivalence of spaces, with homotopy inverse \(\PP g\). Finally, because the state maps of \(gf\) and \(\operatorname{Id}_X\) are equal, and likewise for \(fg\) and \(\operatorname{Id}_Y\), the same argument restricts to each source--target summand \(\PP_{\alpha,\beta}X\).
\end{proof}

\begin{lem}\label{lem:H-fibration-path-spaces}
	Let \(p:E\to B\) be a fibration of the H-model structure of flows.  Then for every
	\(\alpha,\beta\in E^0\), the induced map
	\[
	\PP_{\alpha,\beta}p:
	\PP_{\alpha,\beta}E\longrightarrow \PP_{p(\alpha),p(\beta)}B
	\]
	is a Hurewicz fibration of spaces.
\end{lem}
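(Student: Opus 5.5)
The plan is to use the lifting property characterizing H-fibrations together with the cotensor, and to translate lifting problems on path spaces into lifting problems of flows. An H-fibration \(p:E\to B\) has the right lifting property with respect to all cylinder inclusions \(i_0:Z\to I\otimes Z\), for every flow \(Z\). We must show that \(\PP_{\alpha,\beta}p\) has the homotopy lifting property for every space \(W\). Concretely, we are given a map \(h_0:W\to \PP_{\alpha,\beta}E\) and a homotopy \(H:I\times W\to \PP_{p(\alpha),p(\beta)}B\) starting at \(p\circ h_0\), and we must produce a lift.

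We take \(Z=\Glob(W)\). A map \(\Glob(W)\to E\) sending \(0\mapsto\alpha\) and \(1\mapsto\beta\) is exactly a continuous map \(W\to\PP_{\alpha,\beta}E\), so \(h_0\) corresponds to a map of flows \(\widehat h_0:\Glob(W)\to E\). Since \(I\) is path-connected and non-empty, we have \(I\otimes\Glob(W)=I\boxtimes\Glob(W)\), and the globe formula \eqref{eq:globeformula} gives \(I\otimes \Glob(W)\cong\Glob(I\times W)\). Under this identification, the cylinder inclusion \(i_0\) becomes \(\Glob(\{0\}\times W)\to\Glob(I\times W)\).

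This identification is the one point to check carefully. The endpoint map \(X\cong\{0\}\otimes X\to I\otimes X\) is induced by \(\{0\}\to I\), so we need the isomorphism \eqref{eq:globeformula} to be natural in the connected space \(C\). That naturality follows from the adjunction description \(\Flow(C\boxtimes\Glob(W),Y)\cong\Flow(\Glob(W),\{C,Y\}_S)\), whose right-hand side is \(\Top(W,\TOP(C,\PP_{u,v}Y))\) summed over choices of \((u,v)\). That set is naturally \(\Top(C\times W,\PP_{u,v}Y)\), which is natural in \(C\).

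Given this, the homotopy \(H\) yields a map of flows \(\widehat H:\Glob(I\times W)\to B\) with \(0\mapsto p(\alpha)\) and \(1\mapsto p(\beta)\). This gives a commutative square with \(i_0\) on the left, \(p\) on the right, \(\widehat h_0\) on top and \(\widehat H\) on the bottom. The H-fibration property supplies a lift \(L:\Glob(I\times W)\to E\). On states, \(L\circ i_0=\widehat h_0\) forces \(L(0)=\alpha\) and \(L(1)=\beta\), since \(i_0\) is the identity on states. Hence \(\PP_{0,1}L:I\times W\to\PP_{\alpha,\beta}E\) is the required lifted homotopy: it restricts to \(h_0\) at time \(0\), and composing with \(p\) recovers \(H\).

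Therefore \(\PP_{\alpha,\beta}p\) has the right lifting property against all maps \(W\times\{0\}\to W\times I\), that is, it is a Hurewicz fibration in \(\Top\).
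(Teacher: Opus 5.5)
Your proposal is correct and follows the same route as the paper: you encode the homotopy lifting problem for \(\PP_{\alpha,\beta}p\) as a lifting problem of flows against \(\Glob(W)\to\Glob(I\times W)\), identify this map with the cylinder inclusion \(i_0:\Glob(W)\to I\otimes\Glob(W)\) via the globe formula, and lift using the H-fibration property. Your extra checks fill in details that the paper leaves implicit: that the globe formula is natural in the connected parameter space, so that \(i_0\) really corresponds to \(\Glob(\{0\}\times W)\subset\Glob(I\times W)\), and that the lift sends the states to \(\alpha,\beta\).
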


\begin{proof}
	Let \(Z\in\Top\).  A lifting problem in spaces
	\[
	\begin{tikzcd}
		Z\times\{0\} \arrow[r] \arrow[d] & \PP_{\alpha,\beta}E \arrow[d] \\
		Z\times I \arrow[r] & \PP_{p(\alpha),p(\beta)}B
	\end{tikzcd}
	\]
	is equivalently a lifting problem of flows
	\[
	\begin{tikzcd}
		\Glob(Z) \arrow[r] \arrow[d] & E \arrow[d,"p"] \\
		\Glob(Z\times I) \arrow[r] & B
	\end{tikzcd}
	\]
	Here the left vertical map is induced by \(Z\times\{0\}\subset Z\times I\), and the state maps are the specified states \(\alpha,\beta\) and \(p(\alpha),p(\beta)\).  By the globe formula \eqref{eq:globeformula}, this left vertical map is the cylinder inclusion
	\[
	\Glob(Z)\longrightarrow I\otimes \Glob(Z).
	\]
	Since \(p\) is an H-fibration, it has the right lifting property with respect to all such cylinder inclusions.  The resulting lift is precisely a lift in the original topological lifting problem.  Hence \(\PP_{\alpha,\beta}p\) is a Hurewicz fibration.
\end{proof}

\begin{thm}\label{thm:q-cof-H-cof}
	Every q-cofibration of flows is an H-cofibration of flows.
\end{thm}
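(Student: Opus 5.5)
Since the q-model structure is cofibrantly generated and H-cofibrations are defined by a left lifting property, they are closed under pushouts, transfinite compositions and retracts. Every q-cofibration is a retract of a relative cell complex built from the generating set \(I^{\mathrm{gl}}\cup\{C,R\}\), so it suffices to show that each generating q-cofibration is an H-cofibration. Instead of using the definition directly (lifting against trivial H-fibrations), I will use the characterization recalled at the end of Section~\ref{sec:H-model-structure}: H-cofibrations are exactly the maps with the left lifting property against all maps \(p_0:Z^{I}\to Z\). By the Barthel--Riehl construction \cite[Corollary~5.23]{Barthel-Riel}, this is equivalent, by adjunction, to the standard homotopy extension property. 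The plan is therefore to verify, for each generator \(i:A\to B\), that every lifting problem of \(i\) against \(p_0:Z^I\to Z\) has a solution.

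For \(C:\varnothing\to\{0\}\) and \(R:\{0,1\}\to\{0\}\), the check is elementary once the states of \(Z^I\) are identified. Since \(I\) is path-connected, \(Z^I=\{I,Z\}_S\) has state set \(Z^0\), and \(p_0\) is the identity on states. For \(C\), a lift is just the choice of the given state. For \(R\), the data consist of a map \(\{0\}\to Z\), i.e. a state \(z\), together with a map \(\{0,1\}\to Z^I\) compatible with it. Because there are no execution paths involved, both states \(0,1\) go to \(z\), and the lift \(\{0\}\to Z^I\) sending \(0\) to \(z\) works.

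For the globular generators \(\Glob(\Sp^{n-1})\subset\Glob(\Di^n)\), the lifting problem against \(p_0\) reduces to path spaces, exactly as in the proof of Lemma~\ref{lem:H-fibration-path-spaces}. The states are fixed to some \(\alpha,\beta\in Z^0\). We are then given \(\Di^n\to\PP_{\alpha,\beta}Z\) and \(\Sp^{n-1}\to \TOP(I,\PP_{\alpha,\beta}Z)=\PP_{\alpha,\beta}Z^I\) that agree at time \(0\), and we need an extension \(\Di^n\to\TOP(I,\PP_{\alpha,\beta}Z)\). This is precisely the homotopy extension problem for \(\Sp^{n-1}\subset\Di^n\), which is solvable because \(\Sp^{n-1}\to\Di^n\) is a Hurewicz (closed) cofibration of spaces: one uses a retraction \(\Di^n\times I\to \Di^n\times\{0\}\cup\Sp^{n-1}\times I\) and cartesian closedness of \(\Top\). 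I expect the main obstacle to be the correct identification of \(p_0\) and of maps into \(\{I,Z\}_S\). In particular, one must check that a flow map \(\Glob(W)\to Z^I\) with fixed states is the same as a continuous map \(W\to\TOP(I,\PP_{\alpha,\beta}Z)\), and that no composition constraint interferes. This works because \(\Glob(W)\) has no composable pairs.

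Finally, the closure properties are assembled. The class of maps with the left lifting property against a fixed class is closed under pushout, transfinite composition and retract. Hence the saturation of \(I^{\mathrm{gl}}\cup\{C,R\}\), which is the class of q-cofibrations, is contained in the H-cofibrations.
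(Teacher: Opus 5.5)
Your reduction to the generators $I^{\mathrm{gl}}\cup\{C,R\}$ and the closure argument are fine, and your checks against $p_0:Z^I\to Z$ are correct. However, they prove a weaker statement than the theorem.

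The paper only records that H-cofibrations \emph{have} the left lifting property against every $p_0$, because each $p_0$ is itself an H-fibration and a homotopy equivalence. It does not assert the converse, and the converse fails in general. In the Barthel--Riehl setting, lifting against all $p_0$ is just the homotopy extension property. The cofibrations of the model structure are instead the maps with the left lifting property against \emph{all} H-fibrations that are homotopy equivalences. These classes already differ for spaces: Str\o m's cofibrations are the \emph{closed} Hurewicz cofibrations, and non-closed maps with the homotopy extension property exist. So showing that the generators have the homotopy extension property does not show that they are H-cofibrations. Your ``exactly'' characterization is unjustified, and your argument breaks at that point.

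What is missing is the lifting against an arbitrary trivial H-fibration $p:E\to B$.
\begin{enumerate}
\item For $C$ and $R$ you need $p^0$ to be surjective and injective, respectively. For $p_0$ this is automatic, but for a general $p$ it needs an argument. H-homotopic maps have equal state maps, since $I$ is path-connected and so $(I\otimes X)^0=X^0$. Hence an H-homotopy equivalence is bijective on states (Lemma~\ref{lem:homotopic-flow-maps} and Proposition~\ref{prop:Hweak-equivalence-state-path}).
\item For the globular generators, you need two facts about $\PP_{\alpha,\beta}p$. First, it is a Hurewicz fibration: lift against $\Glob(W)\to I\otimes\Glob(W)\cong\Glob(W\times I)$, as in Lemma~\ref{lem:H-fibration-path-spaces}. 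Second, it is a homotopy equivalence. Together these make it a trivial fibration of the Hurewicz model structure on $\Top$, and the closed NDR pair $\Sp^{n-1}\subset\Di^n$ lifts against it.
\end{enumerate}
Replacing your $p_0$ checks with these two steps gives the paper's proof.
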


\begin{proof}
	Since the q-cofibrations are the retracts of relative \((I^{\mathrm{gl}}\cup\{C,R\})\)-cell complexes and since the class of
	H-cofibrations is closed under pushouts, transfinite compositions and retracts, it suffices
	to prove that every element of \(I^{\mathrm{gl}}\cup\{C,R\}\) is an H-cofibration.
	
	Let \(p:E\to B\) be a trivial H-fibration.  By Proposition
	\ref{prop:Hweak-equivalence-state-path}, the map \(p^0:E^0\to B^0\) is a bijection.
	Therefore \(C\) has the left lifting property with respect to \(p\): a map \(\{0\}\to B\) is just a
	state of \(B\), and it has a unique preimage in \(E^0\).  The same bijectivity proves that \(R\)
	has the left lifting property with respect to \(p\): if two states of \(E\) have the same image in
	\(B\), then they are equal. It remains to treat the globular generators of \(I^{\mathrm{gl}}\).  Let \(j:\Sp^{n-1}\to \Di^n\) be the standard inclusion.  A lifting problem
	\[
	\begin{tikzcd}
		\Glob(\Sp^{n-1}) \arrow[r] \arrow[d] & E \arrow[d,"p"] \\
		\Glob(\Di^n) \arrow[r] & B
	\end{tikzcd}
	\]
	fixes two states \(\alpha,\beta\in E^0\) and is exactly the same thing as a lifting problem in
	\(\Top\)
	\[
	\begin{tikzcd}
		\Sp^{n-1} \arrow[r] \arrow[d,"j"'] & \PP_{\alpha,\beta}E \arrow[d,"\PP_{\alpha,\beta}p"] \\
		\Di^n \arrow[r] & \PP_{p(\alpha),p(\beta)}B
	\end{tikzcd}
	\]
	By Lemma \ref{lem:H-fibration-path-spaces}, the right vertical map is a Hurewicz fibration
	of spaces.  By Proposition \ref{prop:Hweak-equivalence-state-path}, it is also a homotopy
	equivalence.  Thus it is a trivial fibration in the Hurewicz model structure of \(\Top\).
	The inclusion \(\Sp^{n-1}\subset \Di^n\) is a closed NDR-pair, hence a cofibration in that
	Hurewicz model structure.  Therefore the displayed lifting problem has a solution.  This
	proves that every globular generator is an H-cofibration, and the theorem follows.
\end{proof}

\begin{proposition}\label{prop:q-whitehead-to-H}
	Let \(f:X\to Y\) be a weak equivalence of the q-model structure of flows.  If \(X\) and \(Y\)
	are q-cofibrant, then \(f\) is a weak equivalence of the H-model structure.  Equivalently,
	\(f\) is a homotopy equivalence for the topological enrichment of \(\Flow\).
\end{proposition}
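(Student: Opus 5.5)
The plan is a Whitehead-type argument: show that in the q-model structure, two maps between q-cofibrant flows that are q-homotopic through a suitable cylinder are also homotopic for the topological enrichment, and then apply the classical fact that a weak equivalence between bifibrant objects is a homotopy equivalence. Every flow is q-fibrant, because path spaces are fibrant in the q-model structure of \(\Top\). So \(X\) and \(Y\) are q-bifibrant, and \(f\) is a homotopy equivalence in the q-model structure: there is \(g:Y\to X\) with \(gf\) and \(fg\) left homotopic to the identities. These left homotopies can be taken through any fixed cylinder object of a q-cofibrant flow.

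The key step is to produce a good cylinder for q-cofibrant \(Z\) that is also the topological cylinder. The natural candidate is \(I\otimes Z = I\boxtimes Z\), using the canonical map \(Z\amalg Z\to I\boxtimes Z\) induced by the endpoints \(\{0\},\{1\}\subset I\), followed by the projection \(I\boxtimes Z\to Z\) induced by \(I\to\{0\}\). To see that this is a cylinder, I would check two things.

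\begin{enumerate}[label=(\alph*)]
\item \(Z\amalg Z\to I\boxtimes Z\) is a q-cofibration when \(Z\) is q-cofibrant. Use the pushout-product/adjunction: a lifting problem against a trivial q-fibration \(p:E\to B\) transposes, via \(\FLOW(I\boxtimes Z,-)\cong\FLOW(Z,\{I,-\}_S)\), to a lifting problem of \(Z\) (or of a cell generator) against the map \(\{I,E\}_S\to \{\partial I,E\}_S\times_{\{\partial I,B\}_S}\{I,B\}_S\). On path spaces this map is the standard pullback-corner map in \(\Top\), which is a trivial q-fibration since \(\partial I\subset I\) is a q-cofibration. The state sets need care: \(\{\partial I,E\}_S\) should be understood as \(E^{\partial I}=E\times E\) with state set \(E^0\times E^0\), so the correct statement uses the cotensor \(E^U\) of Proposition~\ref{prop:cotensor} and its states. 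One checks that the corner map is bijective on the relevant states, since \(p^0\) is bijective.
\item \(I\boxtimes Z\to Z\) is a q-weak equivalence. It is state-bijective, and on path spaces it should be a homotopy equivalence, since it has a section \(\{0\}\) and a deformation contracting \(I\).
\end{enumerate}

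Granted this cylinder, a left homotopy \(H:I\boxtimes X\to X\) from \(gf\) to \(\mathrm{Id}_X\) is exactly an H-homotopy, and similarly for \(fg\). Hence \(f\) is a homotopy equivalence for the topological enrichment, which is the definition of an H-weak equivalence.

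The main obstacle is step (a), specifically checking that the corner map is a trivial q-fibration of flows, including bijectivity on states and the correct identification of \(\{\partial I,E\}_S\). A workable fallback is to reduce to generators. It suffices to show that \(I\boxtimes(-)\) sends each generating q-cofibration to a map whose pushout-product with \(\partial I\subset I\) is a q-cofibration. By the globe formula \eqref{eq:globeformula}, \(I\boxtimes\Glob(\Sp^{n-1}\subset\Di^n)\) becomes \(\Glob\) of \(I\times\Sp^{n-1}\subset I\times\Di^n\), and the pushout-product is \(\Glob\) of a relative CW inclusion, which is a q-cofibration. For \(C\) and \(R\) the check is direct on states.
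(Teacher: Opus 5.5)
Your proposal is correct and follows the paper's proof. All flows are q-fibrant, so the Whitehead theorem gives a homotopy inverse, and realizing the left homotopies through the cylinder \(I\boxtimes Z=I\otimes Z\) turns them into H-homotopies. The only difference is that the paper cites \cite[Corollary~7.11]{model3} for the fact that \(I\boxtimes Z\) computes q-homotopies out of a q-cofibrant flow, whereas you prove directly that it is a good cylinder. Both of your arguments for this are sound: the adjunction argument using the cotensor \(E^{\partial I}=E\times E\), and the pushout-product check on the generators via the globe formula.
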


\begin{proof}
	All flows are fibrant in the q-model structure.  Hence \(X\) and \(Y\) are cofibrant-fibrant objects of the q-model structure.  By the usual Whitehead theorem for model categories, a weak equivalence between cofibrant-fibrant objects admits a homotopy inverse, with homotopies computed using any cylinder object of the q-model structure. By \cite[Corollary~7.11]{model3}, and since \(I=[0,1]\) is path-connected, two maps of flows \(f,g:X\to Y\) are homotopic for the q-model structure if there exists a map 
	\[
	H:I\boxtimes X = I\otimes X \longrightarrow Y
	\]
	such that the composite
	\[
	X\amalg X\longrightarrow I\otimes X\xlongrightarrow{H} Y,
	\qquad I=[0,1],
	\]
	is equal to \((f,g)\). Therefore the homotopy inverse supplied by the Whitehead criterion is also an inverse up to H-homotopy.  This is precisely to say that \(f\) is a weak equivalence for the H-model structure.
\end{proof}

\section{Pushout along a q-cofibrant replacement}
\label{sec:pushout-along-q-replacement}

At this point, we have enough homotopical material to generalize Theorem~\ref{thm:canonical} to any q-cofibrant replacement of any order-reflecting inclusion of posets \(P\subset Q\).

\begin{proposition} \label{prop:useful-lemma}
	Let \(\mathcal M\) be a model category. Consider a commutative diagram of solid arrows 
	\[
	\begin{tikzcd}
		&&\\
		A \arrow[d,rightarrowtail] \arrow[dashed,r,"\ell"] \arrow[rr,bend left=30pt]  & B \arrow[d]\ \arrow[r,twoheadrightarrow,"\simeq"] & C \arrow[d] \\
		A' \arrow[dashed,r,"\ell'"] \arrow[rr,bend right=30pt] & B' \arrow[twoheadrightarrow,r,"\simeq"] & C'\\
		&&
	\end{tikzcd}
	\]
	such that the maps \(B\to C\) and \(B'\to C'\) are trivial fibrations and such that the map \(A\to A'\) is a cofibration between cofibrant objects. Then there exist dashed arrows \(\ell:A\to B\) and \(\ell':A'\to B'\) making the diagram commutative.
\end{proposition}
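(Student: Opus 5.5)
The lift is built in two stages: first \(\ell\) on \(A\), then \(\ell'\) on \(A'\) relative to \(\ell\). The only inputs are the lifting axiom (cofibrations have the left lifting property against trivial fibrations) and the fact that pushouts of cofibrations are cofibrations.

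\textbf{Step 1: constructing \(\ell\).} Since \(A\) is cofibrant, \(\varnothing\to A\) is a cofibration. Consider the square with top \(\varnothing\to B\), bottom the given map \(A\to C\), and right side the trivial fibration \(B\to C\). A lift exists and gives \(\ell:A\to B\) with \(A\xrightarrow{\ell}B\to C\) equal to the given map \(A\to C\).

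\textbf{Step 2: constructing \(\ell'\).} We need \(\ell':A'\to B'\) satisfying two conditions: \(A'\xrightarrow{\ell'}B'\to C'\) is the given map \(A'\to C'\), and \(\ell'\) restricted along \(A\to A'\) equals \(A\xrightarrow{\ell}B\to B'\). I would solve this as the lifting problem
\[
\begin{tikzcd}
A \arrow[r,"\ell"] \arrow[d,rightarrowtail] & B \arrow[r] & B' \arrow[d,twoheadrightarrow,"\simeq"]\\
A' \arrow[rr] && C'.
\end{tikzcd}
\]
The left map is a cofibration and the right map is a trivial fibration, so a lift \(\ell'\) exists.

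\textbf{The one check.} The square must commute. The composite \(A\to B\to B'\to C'\) equals \(A\to B\to C\to C'\) by commutativity of the right-hand square of the given diagram. By Step 1, \(A\to B\to C\) is the given map \(A\to C\), so the composite becomes \(A\to C\to C'\). Commutativity of the outer solid diagram identifies this with \(A\to A'\to C'\).

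This verification is the only point needing care; I do not expect any real obstacle. The upper triangle of the lift gives the left-square commutativity, \(A\to A'\xrightarrow{\ell'}B'\) equals \(A\xrightarrow{\ell}B\to B'\). The lower triangle of the lift gives \(A'\xrightarrow{\ell'}B'\to C'\) equal to \(A'\to C'\). Together with Step 1, every region of the diagram commutes.

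Cofibrancy of \(A'\) is not needed separately: it follows from \(A\) cofibrant and \(A\to A'\) a cofibration, but the argument never uses it.
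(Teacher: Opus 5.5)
Your proof is correct. The paper gives no argument of its own here; it only cites \cite[Proposition~2.5]{MultipointedSubdivision}, and your two successive lifts (first \(\ell\) against \(\varnothing\to A\), then \(\ell'\) against \(A\to A'\) with top map \(A\xrightarrow{\ell}B\to B'\)) are the standard argument behind that citation, with the one commutativity check done correctly. As you note, only the cofibrancy of \(A\) is used, and the closure of cofibrations under pushouts that you list among your inputs is never actually needed.
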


\begin{proof}
	It is \cite[Proposition~2.5]{MultipointedSubdivision}.
\end{proof}

\begin{proposition} \label{prop:q-2-h-bis}
	Let \(f:X\to Y\) be a q-cofibration of flows. Then for every \(u,v\in X^0\), the induced map 
	\[
	\PP_{u,v} X \xlongrightarrow{f}\PP_{f(u),f(v)}Y
	\]
	is an h-cofibration of spaces.
\end{proposition}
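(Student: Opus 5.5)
The argument follows the proof of Proposition~\ref{prop:q-2-h}, with generating q-cofibrations in place of generating trivial ones. Since the q-model structure is cofibrantly generated, every q-cofibration \(f\) is a retract of a relative \((I^{\mathrm{gl}}\cup\{C,R\})\)-cell complex \(g:X\to Y'\), with \(f = r\circ g\circ s\) where \(s\) and \(r\) are compatible retraction data fixing \(X\). For \(u,v\in X^0\), the map \(\PP_{u,v}X\to\PP_{f(u),f(v)}Y\) is then a retract, in the arrow category of \(\Top\), of \(\PP_{u,v}X\to\PP_{g(u),g(v)}Y'\). The source retraction is the identity and the target maps come from \(Y\to Y'\to Y\); these respect source and target because the state maps commute. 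Since h-cofibrations are closed under retracts, it suffices to treat \(g\).

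Write \(g\) as a transfinite tower \(X=X_0\to X_1\to\cdots\to \injlim X_\mu=Y'\), where each step is a pushout of a generator. I check each generator type in turn.
\begin{enumerate}[label=(\alph*)]
\item For a pushout of \(C\), a new isolated state is added, and the path spaces between old states are unchanged.
\item For a pushout of \(R:\{0,1\}\to\{0\}\), two states are identified. Here \(\PP_{u,v}\) of the pushout is a coproduct of words, and the summand corresponding to words of length one is a coproduct summand, so the inclusion of \(\PP_{u,v}X_\mu\) is the inclusion of a summand. This is an h-cofibration. I would cite the analysis in \cite{leftproperflow}, which treats \(R\) explicitly; if needed, I would argue directly that the summand corresponding to \(\PP_{u,v}X_\mu\) is clopen.
\item For a pushout of \(\Glob(\Sp^{n-1})\subset\Glob(\Di^n)\), I would invoke \cite[Theorem~5.4(1)]{leftproperflow}, the companion of the part (2) used in Proposition~\ref{prop:q-2-h}. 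It states that such a pushout induces an h-cofibration on each path space, because the new path space is built from the old one by iterated pushouts along maps of the form \(\text{(h-cofibration)}\,\square\,\mathrm{id}\), as \(\Sp^{n-1}\subset\Di^n\) is an h-cofibration.
\end{enumerate}

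Composing along the tower gives a transfinite composite of h-cofibrations of spaces, which is again an h-cofibration because h-cofibrations are the cofibrations of the h-model structure of \(\Top\). Each step is relative-\(T_1\) by \cite[Proposition~2.6]{leftproperflow}, so Theorem~\ref{thm:path-almost-accessible} identifies \(\injlim \PP_{u,v}X_\mu\) with \(\PP_{g(u),g(v)}Y'\) by a homeomorphism. The states \(u,v\) must be tracked along the tower, possibly through identifications caused by \(R\); at each stage one uses the images of \(u,v\).

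The main obstacle is the precise citation, or proof, of the single-step statement for the generators \(R\) and \(I^{\mathrm{gl}}\). The case of \(R\) deserves care, since identifying states can create new composites between images of old states, so the old path space must be seen to sit as a summand inside the new one. A fallback is available: the first steps of the tower are pushouts of \(C\) and \(R\), while path spaces are only affected by the globular cells afterwards. Alternatively, one can reduce to q-cofibrant flows and use \cite[Theorem~5.7]{leftproperflow}.
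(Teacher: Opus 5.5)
Your proposal is correct and follows the paper's proof essentially step by step: you write $f$ as a retract of a relative $(I^{\mathrm{gl}}\cup\{C,R\})$-cell complex with one generator per stage, use \cite[Theorem~5.4(1)]{leftproperflow} for globular cells and a reduced-word coproduct-summand argument for $R$, and apply Theorem~\ref{thm:path-almost-accessible} at limit ordinals before closing under transfinite composition and retracts. Two details should be stated explicitly, as the paper does. First, the relative-$T_1$ hypothesis concerns the whole map $\PP X_\mu\to\PP X_{\mu+1}$ rather than only its $(u,v)$-component; for $R$ this map is again a summand inclusion, since the length-one words form the coproduct of all $\PP_{u',v'}X_\mu$ over pairs with the same image, of which $\PP_{u,v}X_\mu$ is just one summand. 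Second, a pushout of $R$ along two already equal states is an isomorphism.
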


If \(X\) and \(Y\) were q-cofibrant, this would follow from \cite[Theorem~5.7]{leftproperflow}. We prove the same conclusion without assuming that \(X\) and \(Y\) are q-cofibrant.

\begin{proof}
	Every q-cofibration is a retract of a relative \(\bigl(I^{\mathrm{gl}}\cup\{C,R\}\bigr)\)-cell complex. After refining the cellular presentation, we may suppose that one generating q-cofibration is attached at each successor stage.
	
	For a pushout of a map of \(I^{\mathrm{gl}}\), the induced maps on execution-path spaces are h-cofibrations by \cite[Theorem~5.4(1)]{leftproperflow}. A pushout of \(C\) induces a homeomorphism on execution-path spaces. Finally, consider a pushout of \(R\), and denote its state map by \(q\). If the two chosen states are already equal, the pushout is an isomorphism. Otherwise, the reduced-word description of this pushout gives, for every pair of states \(a,b\), a homeomorphism
	\[
	\PP_{q(a),q(b)}Y
	\cong
	\PP_{a,b}X\amalg Z_{a,b}
	\]
	for some space \(Z_{a,b}\), under which
	\[
	\PP_{a,b}X\longrightarrow
	\PP_{q(a),q(b)}Y
	\]
	is the inclusion of the first coproduct summand. Since every topological space is h-cofibrant, this map is an h-cofibration.
	
	Consider now a cellular tower
	\[
	X=X_0\longrightarrow X_1\longrightarrow\cdots\longrightarrow
	X_\lambda=Y,
	\]
	and let \(u_\alpha,v_\alpha\) be the images in \(X_\alpha^0\) of fixed states \(u,v\in X^0\). The preceding discussion shows that every successor map
	\[
	\PP_{u_\alpha,v_\alpha}X_\alpha
	\longrightarrow
	\PP_{u_{\alpha+1},v_{\alpha+1}}X_{\alpha+1}
	\]
	is an h-cofibration. It also shows that every map
	\[
	\PP X_\alpha\longrightarrow\PP X_{\alpha+1}
	\]
	is an h-cofibration, and hence a relative-\(T_1\) inclusion. At every nonzero limit ordinal \(\beta\leq\lambda\), the homeomorphism supplied by Theorem~\ref{thm:path-almost-accessible} restricts to a homeomorphism
	\[
	\injlim_{\alpha<\beta}
	\PP_{u_\alpha,v_\alpha}X_\alpha
	\cong
	\PP_{u_\beta,v_\beta}X_\beta.
	\]
	Consequently,
	\[
	\PP_{u,v}X\longrightarrow
	\PP_{f(u),f(v)}Y
	\]
	is a transfinite composite of h-cofibrations and is therefore an h-cofibration. The result follows since h-cofibrations are closed under retracts.
\end{proof}

\begin{definition} \label{def:q-cofibrant-replacement-inclusion-poset}
	A \textit{q-cofibrant replacement} of a map of flows \(f:X\to Y\) is a commutative square 
	\[\begin{tikzcd}[row sep=2.5em] X^{\mathrm{cof}} \arrow[d,rightarrowtail,"f^{\mathrm{cof}}"'] \arrow[r,twoheadrightarrow,"\sim"]& X \arrow[d,"f"]\\ Y^{\mathrm{cof}} \arrow[r,twoheadrightarrow,"\sim"]&Y \end{tikzcd}\]
	in which \(X^{\mathrm{cof}}\) and \(Y^{\mathrm{cof}}\) are q-cofibrant, the horizontal maps are trivial q-fibrations, and \(f^{\mathrm{cof}}\) is a q-cofibration.
\end{definition}

\begin{thm} \label{thm:r}
	Let \(P\subset Q\) be an order-reflecting inclusion of posets, and let
	\[
	i^{\mathrm{cof}}:P^\mathrm{cof}\longrightarrow Q^\mathrm{cof}
	\]
	be a q-cofibrant replacement.  Let
	\[
	\begin{tikzcd}[row sep=2.5em]
		P^\mathrm{cof} \ar[r,"\phi"] \ar[d,"i^{\mathrm{cof}}"',rightarrowtail] & X \ar[d,"f",rightarrowtail] \\
		Q^\mathrm{cof} \ar[r,"\phi'"] & Y \cocartesian
	\end{tikzcd}
	\]
	be a pushout of flows.  Then for all states \(u,v\in X^0\), the induced map
	\[
	\PP_{u,v}X\longrightarrow \PP_{f(u),f(v)}Y
	\]
	is a trivial h-cofibration of topological spaces.
\end{thm}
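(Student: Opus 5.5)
The plan is to compare the given replacement \(i^{\mathrm{cof}}\) with the chain replacement \(\Ch(i):\Ch(P)\to\Ch(Q)\) and to transfer Theorem~\ref{thm:canonical} along the comparison using the H-model structure. By Proposition~\ref{prop:chain-replacement}, the square formed by \(\Ch(i)\), \(i\), \(\epsilon_P\) and \(\epsilon_Q\) is itself a q-cofibrant replacement of \(i\). Proposition~\ref{prop:useful-lemma} applies with \(A\to A'\) equal to \(\Ch(P)\to\Ch(Q)\), a cofibration between cofibrant objects, and with \(B\to C\), \(B'\to C'\) equal to the trivial q-fibrations \(P^{\mathrm{cof}}\to P\) and \(Q^{\mathrm{cof}}\to Q\). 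It yields maps \(\ell:\Ch(P)\to P^{\mathrm{cof}}\) and \(\ell':\Ch(Q)\to Q^{\mathrm{cof}}\) compatible with \(\Ch(i)\) and \(i^{\mathrm{cof}}\). By two-out-of-three, \(\ell\) and \(\ell'\) are q-weak equivalences between q-cofibrant flows. Proposition~\ref{prop:q-whitehead-to-H} then makes them H-weak equivalences, that is, homotopy equivalences for the topological enrichment.

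Next I would form the pushouts \(X':=X\amalg_{\Ch(P)}\Ch(Q)\), with \(\Ch(P)\to X\) equal to \(\phi\ell\), and \(Y=X\amalg_{P^{\mathrm{cof}}}Q^{\mathrm{cof}}\). There is a comparison map \(X'\to Y\), which is the map of pushouts induced by the identity of \(X\), by \(\ell\) and by \(\ell'\). By Theorem~\ref{thm:q-cof-H-cof}, both \(\Ch(i)\) and \(i^{\mathrm{cof}}\) are H-cofibrations, and every flow is H-cofibrant. In the H-model structure every object is both fibrant and cofibrant, so that structure is left proper. The cube/gluing lemma therefore applies to the map of spans \((Q^{\mathrm{cof}}\leftarrow P^{\mathrm{cof}}\to X)\) \(\leftarrow\) \((\Ch(Q)\leftarrow\Ch(P)\to X)\), whose components \(\ell',\ell,\mathrm{Id}_X\) are H-weak equivalences. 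It shows that \(X'\to Y\) is an H-weak equivalence. Proposition~\ref{prop:Hweak-equivalence-state-path} then gives that \(X'\to Y\) is bijective on states and a homotopy equivalence \(\PP_{a,b}X'\to\PP_{a,b}Y\) on each path space.

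The conclusion follows by composing. Theorem~\ref{thm:canonical} says that \(\PP_{u,v}X\to\PP_{u,v}X'\) is a trivial h-cofibration. Composing with the homotopy equivalence \(\PP X'\to\PP Y\) and noting that the composite \(X\to X'\to Y\) equals \(f\), we get that \(\PP_{u,v}X\to\PP_{f(u),f(v)}Y\) is a homotopy equivalence. Separately, \(f\) is a q-cofibration, being a pushout of \(i^{\mathrm{cof}}\), so Proposition~\ref{prop:q-2-h-bis} says this map is an h-cofibration. Together these show it is a trivial h-cofibration.

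The step I expect to need the most care is the gluing lemma in the H-model structure. One has to check that the Barthel--Riehl model structure is left proper; this should follow since all objects are cofibrant. One must also make sure the lemma is applied with one leg of each span an H-cofibration, which holds here for \(\Ch(i)\) and \(i^{\mathrm{cof}}\), and that the map induced on pushouts really restricts to the identity on \(X\). A second point to verify is that the homotopy equivalence \(\PP X'\to\PP Y\) restricts correctly to the component with endpoints \(f(u),f(v)\). This needs the state bijection of \(X'\to Y\) to be compatible with the images of \(u,v\); that compatibility holds because both pushouts receive \(X\) through the identity.
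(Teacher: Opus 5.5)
Your proposal is correct and follows the paper's proof essentially step for step. Both arguments use the same sequence of steps:
\begin{enumerate}[label=\arabic*.]
\item Obtain the lifts \(\ell,\ell'\) from Proposition~\ref{prop:useful-lemma}, and upgrade them to H-weak equivalences via Proposition~\ref{prop:q-whitehead-to-H}.
\item Apply the gluing lemma in the proper H-model structure to the comparison map \(X\amalg_{\Ch(P)}\Ch(Q)\to Y\).
\item Combine Theorem~\ref{thm:canonical} with two-out-of-three to get the homotopy equivalence on path spaces.
\item Use Proposition~\ref{prop:q-2-h-bis} to get the h-cofibration part.
\end{enumerate}
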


A possible proof of Theorem~\ref{thm:r} would be to apply the gluing lemma \cite[Lemma~8.12]{MR2001d:55012} in the h-model structure of flows, provided that this model structure is left proper (it is right proper since all flows are h-fibrant). Instead, we use the H-model structure introduced in Section~\ref{sec:H-model-structure}. We recall that all flows are H-cofibrant and H-fibrant. Thus the H-model structure of flows is proper.

\begin{proof}
	Consider the commutative diagram of flows 
	\[
	\begin{tikzcd}
		&&\\
		\Ch(P) \arrow[d,rightarrowtail] \arrow[dashed,r,"\ell"] \arrow[rr,bend left=30pt]  & P^\mathrm{cof} \arrow[d]\ \arrow[r,twoheadrightarrow,"\simeq"] & P \arrow[d] \\
		\Ch(Q) \arrow[dashed,r,"\ell'"] \arrow[rr,bend right=30pt] & Q^\mathrm{cof} \arrow[twoheadrightarrow,r,"\simeq"] & Q\\
		&&
	\end{tikzcd}
	\]
	The existence of \(\ell\) and \(\ell'\) is a consequence of Proposition~\ref{prop:useful-lemma}: they are weak equivalences of flows by the two-out-of-three property. Consider the flow \(Y^{\mathrm{ch}}\) defined by the pushout diagram 
	\[
	\begin{tikzcd}[column sep=5em]
		\Ch(P) \arrow[r,"\phi\ell"] \arrow[d,"\Ch(i)"',rightarrowtail]& X\arrow[d,rightarrowtail]\\
		\Ch(Q) \arrow[r,"\phi'\ell'"] & \cocartesian Y^{\mathrm{ch}}.
	\end{tikzcd}
	\]
	Then consider the commutative diagram of flows
	\[
	\begin{tikzcd}
		\Ch(P) \arrow[dd,rightarrowtail]\arrow[rd,"\simeq"]\arrow[rr] && X \arrow[dd,rightarrowtail] \arrow[rd,equal]\\
		& P^\mathrm{cof} \arrow[rr,crossing over] & & X \arrow[dd,rightarrowtail]\\
		\Ch(Q) \arrow[rd,"\simeq"]\arrow[rr] && \cocartesian Y^{\mathrm{ch}} \arrow[rd,dashed,shorten >=0.6em] \\
		& Q^\mathrm{cof} \arrow[rr] \arrow[uu,leftarrowtail,crossing over] && \cocartesian Y
	\end{tikzcd}
	\]
	The flows \(\Ch(P)\) and \(\Ch(Q)\) are q-cofibrant by Proposition~\ref{prop:chain-replacement}, and \(P^\mathrm{cof}\) and \(Q^\mathrm{cof}\) are q-cofibrant by hypothesis. Thus the maps \(\Ch(P)\to P^\mathrm{cof}\) and  \(\Ch(Q)\to Q^\mathrm{cof}\) are weak equivalences of the H-model structure of flows by Proposition~\ref{prop:q-whitehead-to-H}. The vertical maps \(\Ch(i):\Ch(P)\to\Ch(Q)\) and \(i^{\mathrm{cof}}:P^\mathrm{cof}\to Q^\mathrm{cof}\)  are H-cofibrations of flows by Theorem~\ref{thm:q-cof-H-cof}. By the gluing lemma \cite[Lemma~8.12]{MR2001d:55012} applied in the H-model structure of flows, the map \(Y^{\mathrm{ch}}\to Y\) is a weak equivalence of the H-model structure of flows. Thus for every \(u,v\in (Y^{\mathrm{ch}})^0\), identified with their image in \(Y^0\), the induced map of path spaces 
	\[
	\PP_{u,v}Y^{\mathrm{ch}} \longrightarrow \PP_{u,v}Y
	\]  
	is a homotopy equivalence by Proposition~\ref{prop:Hweak-equivalence-state-path}. By Theorem~\ref{thm:canonical} and the two-out-of-three property, the map of path spaces
	\[
	\PP_{u,v}X\longrightarrow \PP_{f(u),f(v)}Y
	\]
	is a homotopy equivalence for every \(u,v\in X^0\). Since the map \(X\to Y\) is the pushout of a q-cofibration of flows, it is a q-cofibration of flows as well. By Proposition~\ref{prop:q-2-h-bis}, we deduce that the same map 
	\[
	\PP_{u,v}X\longrightarrow \PP_{f(u),f(v)}Y
	\]
	is an h-cofibration of spaces. Since it has already been shown to be a homotopy equivalence, it is a trivial h-cofibration. This completes the proof.
\end{proof}

\begin{cor} \label{cor:final}
	Let \(f:X\to Y\) be a retract of a transfinite composition of pushouts of q-cofibrant replacements of order-reflecting inclusions of posets. Then, for any pair of states \(u,v\in X^0\), \(f\) induces a trivial h-cofibration of spaces 
	\[
	\begin{tikzcd}
		\PP_{u,v} X \arrow[r,"\simeq"] & \PP_{f(u),f(v)} Y.
	\end{tikzcd}
	\]
	In particular, it is a homotopy equivalence.
\end{cor}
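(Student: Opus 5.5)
The plan is a standard cell-induction argument: establish the property for a single attaching map, then for transfinite compositions, then pass to retracts. The class of maps of flows under consideration is the retract closure of $\mathrm{cell}(\mathcal K)$, where $\mathcal K$ is the class of all q-cofibrant replacements $i^{\mathrm{cof}}:P^{\mathrm{cof}}\to Q^{\mathrm{cof}}$ of order-reflecting inclusions $P\subset Q$.

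\textbf{Step 1 (single pushout).} Let $X_\alpha\to X_{\alpha+1}$ be a pushout of some $i^{\mathrm{cof}}\in\mathcal K$. Theorem~\ref{thm:r} gives that, for all states $u,v\in X_\alpha^0$, the induced map $\PP_{u,v}X_\alpha\to\PP_{u,v}X_{\alpha+1}$ is a trivial h-cofibration. Moreover $X_\alpha\to X_{\alpha+1}$ is a pushout of a q-cofibration, hence a q-cofibration. By Proposition~\ref{prop:q-2-h-bis}, the whole map $\PP X_\alpha\to\PP X_{\alpha+1}$ is therefore an h-cofibration, and hence relative-$T_1$ by \cite[Proposition~2.6]{leftproperflow}.

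\textbf{Step 2 (transfinite composition).} Consider a tower $X=X_0\to X_1\to\cdots\to X_\lambda=Y$. Fix $u,v\in X^0$ and write $u_\alpha,v_\alpha$ for their images in $X_\alpha^0$. At each limit ordinal $\beta\leq\lambda$, Theorem~\ref{thm:path-almost-accessible} applies because every successor map on total path spaces is relative-$T_1$. Restricting its homeomorphism to the appropriate summand, exactly as in the proof of Proposition~\ref{prop:q-2-h-bis}, gives
\[
\PP_{u_\beta,v_\beta}X_\beta\cong\injlim_{\alpha<\beta}\PP_{u_\alpha,v_\alpha}X_\alpha .
\]
One point needs checking: states can be identified along the tower (the cofibrant replacements need not be injective on states), so the restriction to the $(u_\beta,v_\beta)$-summand must be taken to be the colimit of the corresponding summands. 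This holds because the path space decomposes as a coproduct indexed by pairs of states, colimits of sets of states are computed levelwise, and filtered colimits commute with coproducts in $\Top$. Consequently $\PP_{u,v}X\to\PP_{f(u),f(v)}Y$ is a transfinite composite of trivial h-cofibrations of spaces. Trivial cofibrations of the h-model structure of $\Top$ are closed under transfinite composition, so this map is a trivial h-cofibration.

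\textbf{Step 3 (retracts).} Suppose $f$ is a retract of such a transfinite composite $g:X'\to Y'$. For $u,v\in X^0$, the retraction diagram on states carries $(u,v)$ to states $(u',v')$ of $X'$ and back. This gives a retract diagram of spaces exhibiting $\PP_{u,v}X\to\PP_{f(u),f(v)}Y$ as a retract of $\PP_{u',v'}X'\to\PP_{g(u'),g(v')}Y'$. Trivial h-cofibrations are closed under retracts, so the map for $f$ is a trivial h-cofibration. In particular it is a homotopy equivalence.

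The main obstacle is the limit step in Step 2. The summand-wise identification at limit ordinals requires care when states collapse along the tower, and it relies on Theorem~\ref{thm:path-almost-accessible}, which is why Proposition~\ref{prop:q-2-h-bis} is needed to get the relative-$T_1$ hypothesis on the whole path space, not just on fixed summands.
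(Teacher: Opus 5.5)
Your architecture is the paper's: the successor step via Theorem~\ref{thm:r}, the limit step via Theorem~\ref{thm:path-almost-accessible}, then retracts. The gap is in how you verify the relative-$T_1$ hypothesis at limits, and it rests on a false claim.

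\textbf{The cofibrant replacements are injective on states.} You say they need not be. But in Definition~\ref{def:q-cofibrant-replacement-inclusion-poset} the horizontal maps are trivial q-fibrations, hence bijective on states. So $(i^{\mathrm{cof}})^0$ is conjugate to the injection $P\subset Q$. Injections of sets are stable under pushout and transfinite composition, so every state map $j_\alpha^0:X_\alpha^0\to X_{\alpha+1}^0$ in your tower is injective. This is exactly the observation the paper uses, and it gives the relative-$T_1$ hypothesis directly. The map $\PP X_\alpha\to\PP X_{\alpha+1}$ factors as $\coprod_{(a,b)}\PP_{a,b}X_\alpha\to\coprod_{(a,b)}\PP_{j_\alpha(a),j_\alpha(b)}X_{\alpha+1}\to\PP X_{\alpha+1}$. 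The first map is a coproduct of trivial h-cofibrations by Theorem~\ref{thm:r}. The second is a coproduct-summand inclusion because $j_\alpha^0$ is injective. So the composite is an h-cofibration, hence relative-$T_1$, and restricting the limit homeomorphism to the $(u_\beta,v_\beta)$-summand is immediate.

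\textbf{Your Step 1 cites Proposition~\ref{prop:q-2-h-bis} for something it does not state.} You use it to conclude that the whole map $\PP X_\alpha\to\PP X_{\alpha+1}$ is an h-cofibration. The proposition only asserts that each map $\PP_{u,v}X\to\PP_{f(u),f(v)}Y$ is an h-cofibration. The total-space statement appears only inside its proof, and only for single generating-cell attachments. Under your own assumption that states may collapse, per-pair h-cofibrations do not even give injectivity of the total map: $\PP_{a,b}X_\alpha$ and $\PP_{a',b}X_\alpha$ could land in the same summand. So as written, the hypothesis of Theorem~\ref{thm:path-almost-accessible} is not established.

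You could repair this without state-injectivity by rerunning the cellular argument from the proof of Proposition~\ref{prop:q-2-h-bis} on total path spaces: a cell tower with one generator per stage, continuity at limits, then a retract. The injectivity observation is true and much simpler.

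A minor point: your summand-wise identification at limits would survive state collapse. But the reason is that two states identified in the filtered colimit $X_\beta^0$ are already identified at some earlier stage. It is not that filtered colimits commute with coproducts.
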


\begin{proof}
	We first consider a transfinite composition
	\[
	X_0\longrightarrow X_1\longrightarrow\cdots\longrightarrow
	X_\lambda
	\]
	in which, for every \(\alpha<\lambda\), the map
	\[
	j_\alpha: X_\alpha\longrightarrow X_{\alpha+1}
	\]
	is a pushout of a \(q\)-cofibrant replacement of an order-reflecting inclusion of posets.
	
	The state map of every such q-cofibrant replacement is injective. Indeed, in the commutative square of Definition~\ref{def:q-cofibrant-replacement-inclusion-poset}, the two trivial q-fibrations induce bijections on the sets of states, and hence the state map of the q-cofibrant replacement is conjugate, by these bijections, to the given inclusion of posets. Since pushouts and transfinite compositions of injections are injections in \(\Set\), all the maps
	\[
	j_\alpha^0: X_\alpha^0\longrightarrow X_{\alpha+1}^0
	\]
	are injective.
	
	Fix \(u,v\in X_0^0\), and denote by \(u_\alpha,v_\alpha\) their images in \(X_\alpha^0\). By Theorem~\ref{thm:r}, for every \(\alpha<\lambda\), the map
	\[
	\PP_{u_\alpha,v_\alpha}X_\alpha
	\longrightarrow
	\PP_{u_{\alpha+1},v_{\alpha+1}}X_{\alpha+1}
	\]
	is a trivial h-cofibration.
	
	We now verify the behavior at limit ordinals. For every \(\alpha<\lambda\), the map
	\[
	\PP X_\alpha\longrightarrow \PP X_{\alpha+1}
	\]
	factors as
	\[
	\coprod_{(a,b)\in X_\alpha^0\times X_\alpha^0}
	\PP_{a,b}X_\alpha
	\longrightarrow
	\coprod_{(a,b)\in X_\alpha^0\times X_\alpha^0}
	\PP_{j_\alpha(a),j_\alpha(b)}X_{\alpha+1}
	\longrightarrow
	\PP X_{\alpha+1}.
	\]
	The first map is a coproduct of trivial h-cofibrations by Theorem~\ref{thm:r}. The second map is the inclusion of a coproduct summand, because \(j_\alpha^0\) is injective. Since every topological space is h-cofibrant, this second map is an h-cofibration. Consequently,
	\[
	\PP X_\alpha\longrightarrow \PP X_{\alpha+1}
	\]
	is an h-cofibration and therefore a relative-\(T_1\) inclusion.
	
	Let \(\beta\leq\lambda\) be a nonzero limit ordinal. Theorem~\ref{thm:path-almost-accessible} yields a homeomorphism
	\[
	\injlim_{\alpha<\beta}\PP X_\alpha
	\cong
	\PP X_\beta.
	\]
	Since all state maps in the tower are injective, this homeomorphism restricts to a homeomorphism
	\[
	\injlim_{\alpha<\beta}\PP_{u_\alpha,v_\alpha}X_\alpha
	\cong
	\PP_{u_\beta,v_\beta}X_\beta.
	\]
	Thus the tower of path spaces
	\[
	\bigl(
	\PP_{u_\alpha,v_\alpha}X_\alpha
	\bigr)_{\alpha\leq\lambda}
	\]
	is continuous at every limit ordinal, and all its successor maps are trivial h-cofibrations. Since trivial h-cofibrations are closed under transfinite composition, the induced map
	\[
	\PP_{u,v}X_0
	\longrightarrow
	\PP_{u_\lambda,v_\lambda}X_\lambda
	\]
	is a trivial h-cofibration.
	
	Finally, let \(f: X\to Y\) be a retract, in the arrow category of flows, of such a transfinite composition \(g: A\to B\). Write the retraction as
	\[
	(f: X\to Y)
	\xrightarrow{(r_0,r_1)}
	(g: A\to B)
	\xrightarrow{(s_0,s_1)}
	(f: X\to Y),
	\qquad
	(s_0,s_1)(r_0,r_1)=\operatorname{Id}_f.
	\]
	For every \(u,v\in X^0\), the map
	\[
	\PP_{u,v}X
	\longrightarrow
	\PP_{f(u),f(v)}Y
	\]
	is then a retract, in the arrow category of topological spaces, of
	\[
	\PP_{r_0(u),r_0(v)}A
	\longrightarrow
	\PP_{g(r_0(u)),g(r_0(v))}B.
	\]
	The latter is a trivial h-cofibration by the preceding argument. Since trivial h-cofibrations are closed under retracts, the map induced by \(f\) is a trivial h-cofibration.
\end{proof}


\end{document}